\newcommand{\norm}[1]{\left\lVert#1\right\rVert}
\theoremstyle{plain}
\newtheorem{theorem}{Theorem}[section]
\newtheorem{lemma}[theorem]{Lemma}
\newtheorem{Definition}[theorem]{Definition}
\newtheorem{proposition}[theorem]{Proposition}
\theoremstyle{remark}
\newcommand{\Hmm}[1]{\leavevmode{\marginpar{\tiny%
$\hbox to 0mm{\hspace*{-0.5mm}$\leftarrow$\hss}%
\vcenter{\vrule depth 0.1mm height 0.1mm width \the\marginparwidth}%
\hbox to 0mm{\hss$\rightarrow$\hspace*{-0.5mm}}$\\\relax\raggedright #1}}}
\begin{document}

\title[SMOOTHING FOR THE DAVEY-STEWARTSON SYSTEM]{GLOBAL SMOOTHING FOR THE DAVEY-STEWARTSON\\ SYSTEM ON $\mathbb{R}^2$}

\author[E. Ba\c{s}ako\u{g}lu] {Eng\.{I}n Ba\c{s}ako\u{g}lu}
\thanks{The author was partially supported by the T\"UB\.ITAK grant 118F152 and the Bo\u gazi\c ci University Research Fund grant BAP-14081.} 
\email{engin.basakoglu@boun.edu.tr}
\subjclass[2010]{35Q55, 35B41}

\address{Department of Mathematics,
Bo\u gazi\c ci University, 
Bebek 34342, Istanbul, Turkey}

\begin{abstract}
In this paper we study the regularity properties of solutions to the Davey-Stewartson system. It is shown that for initial data in a Sobolev space, the nonlinear part of the solution flow resides in a smoother space than the initial data for all times. We also obtain that the Sobolev norm of the nonlinear part of the evolution grows at most polynomially. As an application of the smoothing estimate, we study the long term dynamics of the forced and weakly damped Davey-Stewartson system. In this regard we give a new proof for the existence and smoothness of the global attractors in the energy space.     

\end{abstract}

\maketitle

\section{INTRODUCTION}\label{ıntro}
The Davey-Stewartson (DS) equations in dimensionless are given by couple of equations of the form 
\begin{align} \label{eqn}
\begin{cases}
i\partial_t u+c_0 \partial_{x}^2u +\partial_{y}^2u=c_1|u|^2u+c_2u\partial_x\phi \\ \partial_x^2\phi+c_3\partial_{y}^2\phi=\partial_x(|u|^2) \end{cases}
\end{align}
where $u=u(x,y,t)$ is complex valued and $\phi=\phi(x,y,t)$ is real valued functions that represent amplitude and mean velocity potential respectively, here the constants are real numbers and their signs determine the character of the equation. The systems \eqref{eqn} were first derived by Davey and Stewartson \cite{Davey}, Benney and Roskes \cite{Benney}, Djordjevic
and Redekopp \cite{Djordjevic} and model the time evolution of $2$D surface of water waves that propagate predominantly in one direction but the wave amplitude is modulated slowly in the horizontal directions. In \cite{Djordjevic}, Djordjevic
and Redekopp showed that the parameter $c_3$ can be negative when capillary effects are important. According to the signs of $c_0$ and $c_3$ respectively, the system \eqref{eqn} is classified as follows \begin{center}
\begin{tabular}{ c c c c }
 Elliptic-Elliptic & Elliptic-Hyperbolic & Hyperbolic-Elliptic & Hyperbolic-Hyperbolic\\ 
 $(+,+)$ & $(+,-)$ & $(-,+)$ & $(-,-)$ 
     
\end{tabular}
\end{center}
  DS systems are very well studied in terms of well-posedness and stability, blow-up profiles, existence of standing and travelling waves. The investigation of the system \eqref{eqn} in terms of well-posedness was initiated by Ghidaglia and Saut \cite{Ghidaglia} who established the local well-posedness in the elliptic-elliptic, elliptic-hyperbolic and hyperbolic-elliptic cases. More precisely, they studied the local and global properties of the elliptic-elliptic and the hyperbolic-elliptic cases in $L^2,H^1,H^2$ also in the elliptic-hyperbolic case, they obtained a global existence of weak solution of \eqref{eqn} under smallness assumption for data in $L^2$. Linares and Ponce \cite{Linares} showed that under some smallness assumptions on the data, elliptic-hyperbolic and hyperbolic-hyperbolic cases of \eqref{eqn} are locally well-posed in the spaces $H^s(\mathbb{R}^2)\cap H^6(\mathbb{R}^2:r^6dxdy)$ for $s\geq 12$, and $H^s(\mathbb{R}^2)\cap H^3(\mathbb{R}^2:r^2dxdy)$ for $s\geq 6$ respectively. As regards to the initial value problem posed on the  $2$-torus Godet \cite{Godet} obtained a local well-posedness result for the hyperbolic-elliptic problem in $H^s(\mathbb{T}^2)$ for $s>1/2$ as well as a blow-up rate for this equation. Concerning the half-plane problem, Fokas \cite{Fokas} studied the DS equation on the half-plane by using the inverse scatering transform techniques along with the formulation of a $d$-bar problem for a sectionally non-analytic function. As for the problem of global well-posedness, it is conjectured that the elliptic-elliptic type of \eqref{eqn} is globally well-posed in $H^s$ for all $s\geq0$. Towards this conjecture, Shen and Guo \cite{Guo} proved that the initial value problem of \eqref{eqn} in the elliptic-elliptic case (with some assumptions on the constants) is globally well-posed for data in $H^s(\mathbb{R}^2)$, for $s>4/7$.  Thereafter, Yang et al. \cite{Yang} improved this result by establishing global well-posedness in $H^s(\mathbb{R}^2)$ for $s>2/5$ where they took advantage of the $I$-method. In particular they obtained a polynomial in time bound for the $H^s$ norm of the solution for $s>2/5$. Some of the other results regarding the system \eqref{eqn} can be found in \cite{2,3,4,5,6}.   
Upon considering elliptic-elliptic type of the system \eqref{eqn} we will study the initial value problem
\begin{align}\label{ee}
\begin{cases}
i\partial_t u+ \Delta u=c_1|u|^2u+c_2u\partial_x\phi,\ \  (x,y)\in \mathbb{R}^2,\ t\in \mathbb{R}, \\ \partial_x^2\phi+ \partial_{y}^2\phi=\partial_x(|u|^2)\\ u(x,y,0)=u_0(x,y) \in H^s(\mathbb{R}^2). 
\end{cases}
\end{align}
To reformulate \eqref{ee} in a better form we implement the Fourier transform in the spatial variable to the second equation of \eqref{ee} so that the system reduces to a single equation \begin{align*} 
i\partial_t u +\Delta u=c_1|u|^2u+ c_2 K(|u|^2)u 
\end{align*} where $K$ is the pseudo-differential operator with symbol $\alpha$ given by \begin{align*} \widehat{K(f)}(\xi)=\alpha(\xi)\Hat{f}(\xi)\  \text{and}\ \alpha(\xi)=\frac{\xi_1^2}{|\xi|^2},\ \xi=(\xi_1,\xi_2)\neq (0,0).
\end{align*}
Therefore by the Duhamel formula, the equation \eqref{ee} is equivalent to  \begin{align}\label{duhamel} 
u(x,y,t)=e^{it \Delta }u_{0}-i \int_0^t e^{i(t-\tau)\Delta}(c_1|u|^2u+c_2K(|u|^2)u)(\tau)d\tau 
\end{align}
where $e^{it\Delta}$ denotes the free solution operator of the corresponding linear problem for the equation \eqref{ee}. In this paper our primary goal is to obtain smoothing properties of \eqref{ee} globally in time, therefore in order to take advantage of global well-posedness result of Theorem \ref{global} below, we will assume the restriction that $c_1+c_2>0$. In the absence of this restriction, smoothing argument here works in the local sense only. The smoothing in this text means that the nonlinear part of the solution flow in relation to \eqref{duhamel} lies in a more regular space than the initial data belongs to. To make it rigorous, below we state our result:
\begin{theorem}\label{mainthm}
Let $c_1+c_2>0$ . Fix $s>\frac{1}{2}$ and $a<\min(\frac{1}{2},s-\frac{1}{2})$. Consider the solution to IVP \eqref{ee} on $\mathbb{R}^2\times \mathbb{R}$ with data $u_0\in H^s(\mathbb{R}^2)$. Suppose that there is an a priori growth bound $\norm{u(t)}_{H^s}\leq C(\norm{u_0}_{H^s})\langle t \rangle^{\beta(s)}$ for some $\beta(s)$. Then \begin{align}\label{smoothing}
u(x,y,t)-e^{it\Delta}u_0 \in C_t^0H^{s+a}_{x,y}  
\end{align}furthermore, we have the growth bound \begin{align*}
\norm{u(t)-e^{it\Delta }u_0}_{H^{s+a}}\leq C(s,a,\norm{u_0}_{H^s})\langle t\rangle^{1+\beta(s)(3+\frac{2}{s})}.
\end{align*}
\end{theorem}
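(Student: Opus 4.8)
The plan is to run the now-standard normal form / differentiation-by-parts argument (in the spirit of Erdoğan–Tzirakis) adapted to the nonlocal cubic nonlinearity $c_1|u|^2u+c_2K(|u|^2)u$. First I would pass to the interaction representation $v(t)=e^{-it\Delta}u(t)$, so that Duhamel becomes $\partial_t v = -i e^{-it\Delta}\mathcal N(u)$ with $\mathcal N(u)=c_1|u|^2u+c_2K(|u|^2)u$. On the Fourier side the cubic interaction of three factors with frequencies $\xi_1,\xi_2,\xi_3$ (with $\xi=\xi_1-\xi_2+\xi_3$, the middle one conjugated) carries the oscillatory phase $e^{it\Omega}$ with resonance function $\Omega = |\xi|^2-|\xi_1|^2+|\xi_2|^2-|\xi_3|^2$. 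The key algebraic identity is $\Omega = 2(\xi-\xi_1)\cdot(\xi-\xi_3) = -2(\xi_1-\xi_2)\cdot(\xi_3-\xi_2)$, so $\Omega$ is comparable to the product of the two "medium" frequency gaps; this lets one integrate by parts in time, trading a factor $\langle t\rangle$-free gain of roughly one derivative against the singular region where $\Omega$ is small.

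The main steps: (i) split the frequency region into the \emph{non-resonant} part $|\Omega|\gtrsim$ (largest frequency), where one integrates by parts in $t$ once, producing a boundary term plus a new cubic-in-$v$ term with an extra cubic (degree-5 after re-substituting $\partial_t v$) term, all of which carry the smoothing factor $1/\Omega$; (ii) the \emph{resonant} part $|\Omega|\ll$ largest frequency, which by the identity above forces two of the frequency gaps to be small and hence can be estimated directly by multilinear Hölder/Strichartz in $H^{s+a}$ with $a<1/2$; (iii) controlling the operator $K$ — since its symbol $\alpha(\xi)=\xi_1^2/|\xi|^2$ is bounded (a zero-order multiplier, pointwise $|\alpha|\le 1$), $K$ is harmless in every $L^p$ and Sobolev estimate and can simply be bounded by $1$ wherever it appears, so the nonlocal term is estimated exactly like the local $|u|^2u$ term. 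The workhorse estimates are the $L^4_{x,y}$ Strichartz inequality for $e^{it\Delta}$ on $\mathbb R^2$ and the fractional Leibniz rule; the restriction $s>1/2$ and $a<\min(1/2,s-1/2)$ is exactly what makes the bilinear/multilinear sums over the two small gaps converge and keeps $H^{s+a}$ below the $H^{2s}$-type threshold where the boundary term lives.

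For the quantitative growth bound I would iterate on unit-length time intervals: on each interval $[n,n+1]$ the a priori bound $\|u(t)\|_{H^s}\le C\langle t\rangle^{\beta(s)}$ feeds into the multilinear estimates, each nonlinear term being at most cubic (or quintic, from the term created by substituting $\partial_t v$ back in), so the $H^{s+a}$-increment over $[n,n+1]$ is bounded by $C\langle n\rangle^{3\beta(s)+\varepsilon}$-type quantities; summing over $n\lesssim t$ and keeping careful track of the extra $2/s$ loss coming from the high-low frequency interactions in the quintic term gives the stated exponent $1+\beta(s)(3+\tfrac2s)$. The continuity in time, i.e. $u-e^{it\Delta}u_0\in C^0_tH^{s+a}_{x,y}$, follows once the $H^{s+a}$ bound is in hand, by a routine dominated-convergence / density argument on the Duhamel integrand.

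The hard part will be step (ii)–(iii) interplay: organizing the resonant region $|\Omega|\ll N_{\max}$ into the subcases according to which two frequency gaps are small, and checking that in each subcase the multilinear estimate closes in $H^{s+a}_{x,y}$ with the two-dimensional Strichartz/Hölder exponents — in particular handling the output-high, all-inputs-comparable resonant interactions, which is where the constraint $a<1/2$ is genuinely needed and where the nonlocal symbol $\alpha$, though bounded, must be verified not to destroy any cancellation (it does not, since we only ever use $|\alpha|\le1$). A secondary technical nuisance is the low-frequency singularity of $\alpha$ at $\xi=0$, but since $\alpha$ stays bounded there and the Sobolev norms weight low frequencies trivially, this causes no real difficulty.
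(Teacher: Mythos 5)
Your proposal is a genuinely different route from the paper's. The paper does not use a normal form / differentiation-by-parts argument at all; instead it proves the key trilinear $X^{s+a,b-1}$ estimate (Proposition \ref{key}) by Tao's $[k;Z]$-multiplier method, decomposing dyadically in frequencies, modulations, and the resonance function $h$ and invoking the sharp bounds of Lemma \ref{cruciallem}. The global statement then follows by the transfer estimate of Lemma \ref{delta}, the embedding $X^{s+a,b}\hookrightarrow C^0_tH^{s+a}$, and an iteration over intervals of length $\delta\sim T(t)^{-2/s}$ coming from the scaling symmetry.

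There is a concrete gap in your resonant-region step. On $\mathbb{R}^2$, with $\Omega=2(\xi_1-\xi_2)\cdot(\xi_3-\xi_2)$, the condition $|\Omega|\ll N_{\max}$ does \emph{not} force the two gaps to be small: $\Omega$ also vanishes whenever the gaps are both large but nearly orthogonal, and this near-orthogonal set is a full codimension-one cone, not a lower-order correction. This is precisely the $(+-)$ coherence case \eqref{lem2,2} in Lemma \ref{cruciallem}, where the bound carries the extra factor $\min(H, H L_{\mathrm{med}}/N_{\min}^2)^{1/2}$ tracking the angular concentration. An $L^4$ Strichartz/Hölder argument that treats this region as if only small gaps occur cannot close and would not exhibit smoothing at all, since $L^4_{t,x}$ Strichartz alone gives local well-posedness without derivative gain; one needs either the modulation-weighted $X^{s,b}$ bilinear improvements (as in the paper) or an explicit bilinear/angular refinement of Strichartz. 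Put differently, the constraint $a<\tfrac12$ in the paper is not about output-high, all-comparable interactions as you suggest, but about exactly this near-orthogonal high-high interaction.

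A secondary inaccuracy: you attribute the $2/s$ loss in the exponent to the quintic term generated by differentiating by parts. In the paper there is no quintic term; the $2/s$ arises because the local existence time shrinks as $\delta\sim\big(C+\|u_0\|_{H^s}\big)^{-2/s}$ (see \eqref{existtime}), so the number of iteration steps up to time $t$ is $J\sim t\,T(t)^{2/s}$ rather than $t$. Summing the cubic local bound $T(t)^3$ over those $J$ steps gives $\langle t\rangle T(t)^{3+2/s}$, hence the exponent $1+\beta(s)(3+2/s)$. Your accounting, summing $\langle n\rangle^{3\beta}$ over $n\lesssim t$, would produce $1+3\beta(s)$ and misses the scaling contribution. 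Your remark that $K$ is harmless because $|\alpha|\le1$ is correct and matches what the paper does implicitly in the multiplier bound.
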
 Let $I$ denote the identity operator and $K$ denote the multiplier operator introduced as above, hence to be able to prove the Theorem \ref{mainthm} we will need the key trilinear estimate:
\begin{proposition}\label{key}
For $s>\frac{1}{2}$, $a<min(\frac{1}{2}, s-\frac{1}{2})$ and $b=\frac{1}{2}+\epsilon$ for $\epsilon>0$ sufficiently small, we have
\begin{align}\label{keyest}
||(c_1I+c_2K)(u\overline{v})w||_{X^{s+a,b-1}} \lesssim ||u||_{X^{s,b}}||v||_{X^{s,b}}||w||_{X^{s,b}}.
\end{align}  
\end{proposition}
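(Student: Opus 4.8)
The plan is to estimate the $X^{s+a,b-1}$ norm of the trilinear term by duality, pairing against a test function $\overline{h}$ with $\|h\|_{X^{-s-a,1-b}}=1$, and then to reduce everything to an $L^4_{t,x,y}$-type Strichartz bound for $X^{s,b}$ functions. The first step is to observe that the multiplier operator $c_1 I + c_2 K$ is bounded on every $L^p$, $1<p<\infty$, and more importantly that its symbol $c_1 + c_2\alpha(\xi)$ is bounded and smooth away from the origin; so up to harmless constants we may drop this operator entirely and it suffices to prove
\begin{align}\label{proofplan-reduced}
\|(u\overline{v})w\|_{X^{s+a,b-1}} \lesssim \|u\|_{X^{s,b}}\|v\|_{X^{s,b}}\|w\|_{X^{s,b}},
\end{align}
a statement about the cubic Schr\"odinger-type nonlinearity on $\mathbb{R}^2$. (One small caveat: $\alpha$ is only bounded, not continuous, at $\xi=0$, but since we lose no derivatives there this causes no trouble — a Littlewood--Paley piece at frequency $0$ is controlled in $L^2$ directly.)

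Next I would pass to the Fourier side. Writing the $X^{s+a,b-1}$ norm via duality against $h$, the left side of \eqref{proofplan-reduced} becomes a convolution integral over $\xi=\xi_1-\xi_2+\xi_3$, $\tau=\tau_1-\tau_2+\tau_3$ of the product of $\widehat{u},\overline{\widehat{v}},\widehat{w},\overline{\widehat{h}}$ against the weight
\[
\frac{\langle\xi\rangle^{s+a}}{\langle\tau+|\xi|^2\rangle^{1-b}}\cdot\frac{1}{\langle\xi_1\rangle^s\langle\tau_1+|\xi_1|^2\rangle^{b}}\cdot\frac{1}{\langle\xi_2\rangle^s\langle\tau_2+|\xi_2|^2\rangle^{b}}\cdot\frac{1}{\langle\xi_3\rangle^s\langle\tau_3+|\xi_3|^2\rangle^{b}}.
\]
The derivative gain is extracted from the frequency weight: by the triangle inequality $\langle\xi\rangle\lesssim\langle\xi_1\rangle+\langle\xi_2\rangle+\langle\xi_3\rangle$, so $\langle\xi\rangle^{s+a}$ is dominated by the largest of the three input frequencies to the power $s+a$. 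Since $a<1/2\le s-1/2<s$ (using $s>1/2$), the power $s+a$ is strictly less than $2s$, and in fact $s+a<s+\min(1/2,s-1/2)$; pulling out $\langle\xi_{\max}\rangle^{s}$ to cancel one of the input $\langle\cdot\rangle^{-s}$ weights leaves an excess of $\langle\xi_{\max}\rangle^{a}$, which must be absorbed by the remaining two $\langle\cdot\rangle^{-s}$ factors. This is where the constraint $a<\min(1/2,s-1/2)$ is exactly what is needed: it guarantees both $a<s$ and $a<1/2$, so that after the cancellation we can still afford to place the two lower-frequency factors in $L^2$-based spaces with enough room for the bilinear/$L^4$ interaction.

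The remaining multilinear estimate is then standard: using the $L^4_{t,x,y}$ Bourgain--Strichartz inequality $\|f\|_{L^4_{t,x,y}}\lesssim\|f\|_{X^{0,b}}$ on $\mathbb{R}^2$ (valid for $b=1/2+\epsilon$, which is precisely why $b$ is taken of that form), one splits the four factors into two $L^4\cdot L^4$ pairings handled by Cauchy--Schwarz in the remaining variables, with the low modulation factor $\langle\tau+|\xi|^2\rangle^{-(1-b)}=\langle\tau+|\xi|^2\rangle^{-(1/2-\epsilon)}$ and the $h$ factor absorbed via Hölder. The main obstacle, and the step requiring genuine care, is the case analysis by which frequency is largest together with the book-keeping of derivatives: when two high frequencies nearly cancel so that $\langle\xi\rangle$ is much smaller than $\langle\xi_{\max}\rangle$ one has extra room, but in the genuinely high--high--high-to-high regime one must verify that splitting the single surplus power $\langle\xi_{\max}\rangle^{a-s}$ (after one cancellation) between the two surviving $\langle\cdot\rangle^{-s}$ weights, $s>1/2$, still leaves each factor summable over dyadic blocks — this is where the strict inequalities $a<1/2$ and $a<s-1/2$ are consumed, and the dyadic sums converge with an $\epsilon$ to spare coming from $b-1/2$. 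I would organize the write-up as: (i) reduction via the multiplier bound; (ii) dyadic decomposition and reduction to the worst frequency configuration; (iii) the $L^4$ Strichartz estimate and Hölder pairing; (iv) summation of the dyadic series using $a<\min(1/2,s-1/2)$.
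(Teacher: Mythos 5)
Your proposal takes the conventional duality plus $L^4$-Strichartz route familiar from cubic NLS local theory; the paper instead runs Tao's $[k;Z]$ multiplier calculus, reducing the quadrilinear integral to two $[3;\mathbb{R}^2\times\mathbb{R}]$ bilinear pieces via the composition lemma and then summing the dyadic block estimates of Lemma \ref{cruciallem} case by case. The two are close in spirit, but the place you declare "standard" is precisely where your sketch has a genuine gap. The derivative gain of $a$ cannot be obtained by frequency weights alone. Consider the configuration $N_1\sim N_4\sim N\gg 1$ with $N_2,N_3\lesssim 1$ (one high input paired against a high dual frequency, the other two inputs low). After pulling $\langle\xi_4\rangle^{s+a}\lesssim\langle\xi_1\rangle^{s}\langle\xi_1\rangle^{a}$ there is an uncancelled factor $N^{a}$ and the remaining weights $\langle\xi_2\rangle^{-s},\langle\xi_3\rangle^{-s}$ are $O(1)$ and give nothing back; your "split the surplus $\langle\xi_{\max}\rangle^{a-s}$ between the two surviving $\langle\cdot\rangle^{-s}$ weights" step simply does not apply. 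The saving factor in this regime is not frequency-geometric but modulational: since $\xi_1+\xi_2+\xi_3+\xi_4=0$, the resonance function $h=\sum\pm|\xi_j|^2$ is $\gtrsim N$ (indeed $\sim N^2$ in the $(+ +)$ sub-case of Lemma \ref{cruciallem}), forcing $L_{\max}\gtrsim N$, and it is the corresponding power of $L_{\max}^{-(1-b)}$ or $L_{\max}^{-b}$ that absorbs $N^{a}$. This is exactly what Lemma \ref{cruciallem} encodes dyadically and what drives the third and fourth sub-cases of the paper's proof of \eqref{3,4}. Your sketch gestures at this ("the low modulation factor ... absorbed via Hölder") but never identifies the resonance identity $|\xi_1|^2+|\xi_2|^2-|\xi_3|^2 = 2\xi_1\cdot\xi_2$ or the orthogonality obstruction, nor the $(+-)$ coherence regime where the resonance is genuinely small and a different block bound \eqref{lem2,2} is required.

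There is a second, smaller issue: the $L^4_{t,x,y}(\mathbb{R}\times\mathbb{R}^2)$ Bourgain--Strichartz bound requires $b>\frac12$, but the dual function lives in $X^{-(s+a),1-b}$ with $1-b=\frac12-\epsilon<\frac12$, so it cannot be dropped into $L^4$ as written. One has to interpolate with a trivial $L^2$ estimate, or work on the bilinear level, or pass to the $[k;Z]$ framework as the paper does; any of these works, but it must be said. Finally, a separate observation on the scope of the estimate: the range $a<\min(\frac12,s-\frac12)$ and the sharpness $a<\frac12$ (demonstrated in the paper by the Knapp example after the proof) are consequences of the modulation analysis in the resonant high-low-low-to-high block, not of the dyadic summability bookkeeping you emphasize; if you only track the frequency weights you will not see where $a<\frac12$ becomes necessary.
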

Note that  we can replace the $X^{s,b}$ norm in the above proposition with the time restricted version of this norm. 

In the remainder part, we present an application of the smoothing estimate--an alternative proof of the existence of a global attractor for the forced and damped Davey-Stewartson system in the energy space $H^1$. In \cite{Wang}, Wang and Guo obtained the existence of a global attractor in $H^1$ yet with a proof based on the splitting argument that requires more regular initial data to reach the compactness. The smoothing effect replaces the splitting method of \cite{Wang}, as a result simplifying the proof and also as a byproduct, gives us that the global attractor is indeed a compact subset of $H^{\frac{3}{2}-}$. Thus we consider the forced and damped Davey-Stewartson system
\begin{align}\label{elelforced}
\begin{cases}
iu_t +\Delta u+i\delta u=c_1|u|^2u+c_2u\partial_x \phi +f,\,\,(x,y)\in\mathbb{R}^2  \\ \Delta \phi=\partial_x(|u|^2)
\end{cases}
\end{align}
where the forcing term $f\in L^2(\mathbb{R}^2)$ is time independent, $\delta>0$, $c_1\geq 0$ and $c_1+c_2\geq 0$. The use of Theorem \ref{attractortemam} below which is provided by a smoothing estimate for the dissipative problem \eqref{elelforced} yield the following result: 

\begin{theorem}\label{thm:attractor}
Consider the forced and weakly damped Davey-Stewartson system \eqref{elelforced} on $\mathbb{R}^2\times[0,\infty)$ with the initial data $u(x,0)=u_0(x)\in H^1(\mathbb{R}^2)$. Then the equation \eqref{elelforced} possesses a global attractor in $H^1(\mathbb{R}^2)$. Furthermore, for any $a\in(0,\frac{1}{2})$ the global attractor is a compact subset of $H^{1+a}(\mathbb{R}^2)$.
\end{theorem}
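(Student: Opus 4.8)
The plan is to follow the standard dynamical-systems framework for dissipative PDE, but to replace the usual regularity-via-splitting step by the smoothing estimate. First I would establish that \eqref{elelforced} generates a continuous semigroup $S(t)$ on $H^1(\mathbb{R}^2)$; this uses local well-posedness (as in the references for the conservative problem, with the extra linear term $i\delta u$ and the forcing $f$ treated perturbatively) together with the energy identities for the damped-forced problem. Multiplying the first equation by $\bar u$ and integrating gives an $L^2$ differential inequality of the form $\frac{d}{dt}\|u\|_{L^2}^2 + 2\delta\|u\|_{L^2}^2 \le C\|f\|_{L^2}^2/\delta$, hence a ball in $L^2$ is absorbing. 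Multiplying by $\bar u_t$ (or using the conserved energy of the undamped system) and combining with the $L^2$ bound yields, via Gronwall, an absorbing ball in $H^1$: there is $R>0$ and, for each bounded $B\subset H^1$, a time $t_B$ with $S(t)B\subset \mathcal{B}_{H^1}(0,R)$ for $t\ge t_B$. Call this absorbing set $\mathcal{B}$.

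The second and crucial step is asymptotic compactness, and here the smoothing estimate does the work. I would write the solution through the Duhamel formula for \eqref{elelforced}, $u(t)=e^{(i\Delta-\delta)t}u_0 - i\int_0^t e^{(i\Delta-\delta)(t-\tau)}\big(c_1|u|^2u+c_2K(|u|^2)u+f\big)(\tau)\,d\tau$, and split it as $u(t)=e^{(i\Delta-\delta)t}u_0 + v(t)$. The linear part $e^{(i\Delta-\delta)t}u_0$ decays exponentially in $H^1$ because of the factor $e^{-\delta t}$, so it contributes nothing to the $\omega$-limit set. For the Duhamel/nonlinear part $v(t)$, the dissipative analogue of Theorem \ref{mainthm}/Proposition \ref{key} (the smoothing estimate stated for \eqref{elelforced} via the referenced Theorem \ref{attractortemam}) gives $v(t)\in H^{1+a}$ for any $a\in(0,\tfrac12)$, with a bound uniform over orbits starting in the absorbing ball $\mathcal{B}$ — the exponential damping in the propagator keeps the time integral bounded, so the polynomial-in-$t$ growth of the conservative estimate is replaced by a uniform constant $C(a,\delta,\|f\|_{L^2},R)$. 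Since the forcing $f$ is only $L^2$, the $f$-term in the Duhamel integral contributes $\int_0^t e^{-\delta(t-\tau)}e^{i(t-\tau)\Delta}f\,d\tau$, which is bounded in $H^1$ uniformly (indeed gains no derivatives but is harmless as it is absorbed into the $v$-bound at the level $H^1\subset H^{1+a}$ only after noting it must actually be handled separately — see remark below). The upshot is that $S(t)\mathcal{B}$ is, up to an exponentially small $H^1$-error, contained in a bounded subset of $H^{1+a}(\mathbb{R}^2)$.

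The remaining point is to convert boundedness in $H^{1+a}$ into precompactness in $H^1$: boundedness in a higher Sobolev space is not compact on the unbounded domain $\mathbb{R}^2$ because of possible loss of mass at spatial infinity. So I would additionally establish a uniform spatial-decay (tails) estimate for orbits in the absorbing ball — typically by a weight-function multiplier argument, showing $\int_{|x|>\rho}(|u|^2+|\nabla u|^2)\,dx$ is small uniformly in $t\ge t_\mathcal{B}$ and in $u_0\in\mathcal{B}$, as $\rho\to\infty$, again exploiting the damping $i\delta u$ to close the estimate. Combining the high-frequency gain from smoothing with the spatial-tail control gives asymptotic compactness of $\{S(t)\}_{t\ge0}$ in $H^1$. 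Then the classical theorem (the cited Temam-type result, Theorem \ref{attractortemam}) produces the global attractor $\mathcal{A}=\omega(\mathcal{B})$ in $H^1$. Finally, since $\mathcal{A}$ is invariant, $\mathcal{A}=S(t)\mathcal{A}$ for all $t$; applying the same decomposition to points of $\mathcal{A}$ and letting $t\to\infty$ kills the exponentially decaying linear part, so $\mathcal{A}$ coincides with the set of nonlinear parts $v$, whence $\mathcal{A}\subset H^{1+a}(\mathbb{R}^2)$ and, being bounded there and spatially decaying, is a compact subset of $H^{1+a}$ for every $a\in(0,\tfrac12)$.

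I expect the main obstacle to be the asymptotic-compactness step, and within it the interplay between the smoothing gain and the non-compactness of $\mathbb{R}^2$: the smoothing estimate alone only upgrades regularity, so one genuinely needs the auxiliary uniform-tail (spatial decay) estimate, and one must check that the dissipative versions of the $X^{s,b}$ trilinear bounds in Proposition \ref{key} hold with constants that are uniform over the absorbing set rather than growing in time — this is where the damping factor $e^{-\delta t}$ in the Duhamel propagator must be used carefully, in particular to handle the low-regularity forcing term $f\in L^2$ which does not itself benefit from any smoothing.
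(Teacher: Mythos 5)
Your overall frame --- split $U(t)u_0 = L^{\delta}(t)u_0 + N(t)u_0$, use exponential decay of the linear part and the smoothing gain $N(t)u_0 \in H^{1+a}$ --- matches the paper. Where you diverge, and where your argument has a genuine hole, is in how asymptotic compactness is obtained. You correctly observe that a uniform $H^{1+a}$ bound does not by itself yield $H^1$-precompactness on the unbounded domain $\mathbb{R}^2$, and you propose to supply the missing ingredient via a uniform spatial-tails estimate. That estimate is announced (``I would additionally establish\dots'') but never proved, and it is not a triviality here: the nonlinearity involves the nonlocal zero-order operator $K$, which does not commute nicely with spatial cutoffs, so a weighted-multiplier argument needs real work. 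As written, this step is a gap, not merely a route left to the reader.

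The paper takes a different route that avoids tails estimates entirely: the energy-equation (Ball) method. It first proves a weak-continuity lemma (Lemma \ref{weakcont}) for the linear and nonlinear parts of $U(t)$, using the global $H^{1+a}$ bound from smoothing together with Arzel\`a--Ascoli and Banach--Alaoglu to extract limits and identify the weak limit of $N(t)u_0^k$ as $N(t)u_0$. It then writes Gronwall identities for $\|U(t)u_0\|_{L^2}^2$ and for the modified energy functional $E(u_0)(t)$, and combines these with weak continuity and Gagliardo--Nirenberg to show that along any $t_k \to \infty$ and $u_{0,k} \in \mathcal{B}$, the weak $H^1$ limit of $U(t_k)u_{0,k}$ is in fact a strong $H^1$ limit. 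That weak-to-strong upgrade is precisely how the paper circumvents the failure of compact Sobolev embedding on $\mathbb{R}^2$, and it is the central mechanism your sketch is missing. One further point you gloss over: $f \in L^2$ does not benefit from the trilinear smoothing estimate, and the paper resolves this by the change of variables $g = (1-\Delta)^{-1}f \in H^2$, $v = u + g$, turning the source into a term smooth enough for the $X^{s,b}$ machinery; your parenthetical remark flags the issue but offers no fix.
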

We now briefly explain the organization of the paper. In section \ref{prelim}, we introduce the notations, function spaces and necessary tools for the proof of the Theorem \ref{mainthm}. Also as regards to a global attractor of the equation \eqref{elelforced}, we give necessary definitions and a compactness theorem. In section \ref{method}, we discuss Tao's $[k;Z]$ multiplier method in order to prove the key trilinear estimate \eqref{keyest}. The proof of \eqref{keyest} will be given in Section \ref{trilinearestimate}. In Section \ref{pfmainthm}, we prove the Theorem \ref{mainthm} and finally Section \ref{attractor} is devoted to prove the existence of a global attractor for the Davey-Stewartson system. 
\section{Notation \& Preliminaries}\label{prelim}
Throughout the text we use $A\lesssim B$ to denote an estimate $A\leq CB$ for some absolute constant C. We write $A\sim B$ if $A\lesssim B$ and $B\lesssim A$. We also write $A\ll B$ to denote an estimate $A\leq \frac{1}{C}B$ for some sufficiently large constant $C$. Moreover we define $\langle\cdot\rangle=\sqrt{1+|\cdot|^2}$. For $s,b\in\mathbb{R}$, we require $X^{s,b}$ spaces corresponding to the evolution $u$ of the DS system that is defined by means of the norm $$\norm{u}_{X^{s,b}}=\norm{\langle \xi\rangle^s\langle\tau+|\xi|^2 \rangle^b\widehat{u}(\xi,\tau)}_{L^2_{\xi,\tau}}.$$ Localized $X^{s,b}$ space is also defined by  $$\norm{u}_{X^{s,b}_{\delta}}=\inf_{\tilde{u}=u,\, |t|\leq\delta}\norm{\tilde{u}}_{X^{s,b}}.$$ For a general discussion, let $\varphi$ be the smooth function satisfying $\varphi(t)=0$ if $|t|\geq 2$, and $\varphi(t)=1$ for $|t|\leq 1$. Also let $\varphi_{\delta}(t)=\varphi(t/\delta)$ for $0<\delta \leq 1$. Consider the Cauchy problem  \begin{align} \label{Cauchy}
\begin{cases}
iu_t + h(\frac{1}{i}\nabla)=F(u)\\ u(0)=u_0                                                           
\end{cases}
\end{align}
where $h$ is a real polynomial on $\mathbb{R}^d$ and $F$ is a non-linear function. Recall that in order to solve the equation \eqref{Cauchy} locally in time, say, in the interval $[-\delta,\delta]$, we write an integral equation corresponding to \eqref{Cauchy} as \begin{align*}u(t)&=\varphi(t)W(t)u_0+i\varphi_{\delta}(t)\int_0^tW(t-s)F(u(s))ds \\& =: \varphi(t)W(t)u_0+i\varphi_{\delta}(t)[W*_tF(u)](t)
\end{align*}
where $W(t)=exp(ith(\frac{1}{i}\nabla))$ is the group of unitary operators that solves the corresponding linear equation and $*_t$ denotes the convolution with respect to the time variable restricted to the interval $[0,t]$. With these notations we have the following lemma which will be important in attaining Theorem \ref{mainthm}. 

\begin{lemma}[See \cite{Ginibre}.]\label{delta}
Let $-\frac{1}{2}<b'\leq 0 \leq b \leq b'+1 $. Then $$\norm{ \varphi_{\delta}(t)W*_tF(u)(t)}_{X^{s,b}} \lesssim \delta^{1-b+b'}\norm{F}_{X^{s,b'}_\delta}.$$
\end{lemma}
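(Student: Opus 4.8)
## Proof plan for Lemma \ref{delta}

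The plan is to follow the standard $X^{s,b}$ local-theory argument (as in Ginibre--Tsutsumi--Velo and Bourgain), reducing everything to a one-dimensional estimate in the modulation variable. First I would note that the $\langle\xi\rangle^s$ weight plays no role: it commutes with all the operators in sight, so after applying the spatial Fourier transform and pulling out $\langle\xi\rangle^s$ we may assume $s=0$, i.e. it suffices to prove $\norm{\varphi_\delta(t)\,W*_t F(u)(t)}_{X^{0,b}}\lesssim \delta^{1-b+b'}\norm{F}_{X^{0,b'}_\delta}$. Next, conjugating by the free group (i.e. writing $v(t)=W(-t)u(t)$, or equivalently passing to the variable $\tau+h(\xi)$ on the Fourier side), the Duhamel term $W*_t F(u)$ becomes, fiberwise in $\xi$, the operator
\begin{align*}
g\longmapsto \varphi_\delta(t)\int_0^t g(s)\,ds
\end{align*}
acting on the profile $g$ of $F(u)$, and the $X^{0,b}$ norm becomes the $H^b_t$ norm (with the Fourier weight $\langle\lambda\rangle^b$, $\lambda$ dual to $t$) after integrating in $\xi$. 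So the whole statement collapses to the scalar time estimate
\begin{align}\label{scalarest}
\norm{\varphi_\delta(t)\int_0^t g(s)\,ds}_{H^b_t} \lesssim \delta^{1-b+b'}\norm{g}_{H^{b'}_t},
\end{align}
uniformly in the frequency parameter, which is exactly the content one needs.

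To prove \eqref{scalarest} I would split $g$ according to the size of its time-frequency $\lambda$ relative to $1/\delta$, or equivalently use a Taylor expansion of the exponential: writing $\int_0^t g(s)\,ds$ via the Fourier representation of $g$, one separates the ``low modulation'' regime $|\lambda|\lesssim 1/\delta$, where one expands $e^{it\lambda}-1=\sum_{k\geq 1}(it\lambda)^k/k!$ and uses that $t^k\varphi_\delta(t)$ has $H^b$ norm $\lesssim \delta^{k+1/2-b}$ together with $|\lambda|^k\lesssim \delta^{-k+1+b'}\langle\lambda\rangle^{b'}$ on this region, summing the resulting geometric-type series; and the ``high modulation'' regime $|\lambda|\gtrsim 1/\delta$, where one simply writes $\varphi_\delta(t)\int_0^t g = \varphi_\delta(t)\int_{-\infty}^t g - \varphi_\delta(t)\int_{-\infty}^0 g$, estimates the first piece by the fact that primitive-then-cutoff gains a derivative (the $\langle\lambda\rangle^{-1}$ from integration beats the loss), and the second piece is $\varphi_\delta(t)$ times a constant, whose $H^b$ norm is $\lesssim \delta^{1/2-b}\|g\|$ after a crude bound. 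Collecting the powers of $\delta$ in each regime yields the exponent $1-b+b'$. The hypotheses $-\tfrac12<b'\le 0\le b\le b'+1$ are precisely what make all these $\delta$-powers nonnegative and all the $t$-integrals and $\lambda$-sums converge.

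Alternatively — and this is probably the cleanest route to actually write down — I would just cite the estimate directly: this is a classical lemma (see \cite{Ginibre}, and also Bourgain's and Kenig--Ponce--Vega's work), and the statement as quoted is their Lemma verbatim, so the ``proof'' in the paper is a pointer to the literature. The main obstacle, if one insists on reproducing it, is the bookkeeping in the low-modulation Taylor expansion: one must check that the series $\sum_k \delta^{k+1/2-b}|\lambda|^k/k!$ telescopes to the claimed power of $\delta$ uniformly, which requires carefully tracking that each term carries a factor $(\delta|\lambda|)^k$ with $\delta|\lambda|\lesssim 1$ on the relevant region, so the sum is dominated by its first term $\delta^{3/2-b}|\lambda|\lesssim \delta^{1-b+b'}\langle\lambda\rangle^{b'}$. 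Everything else — reduction to $s=0$, conjugation by $W(t)$, Plancherel in $\xi$ — is routine.
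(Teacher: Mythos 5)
The paper offers no proof of Lemma~\ref{delta} at all; it simply points to \cite{Ginibre}, exactly as you note at the end. Your low/high‑modulation sketch (reduce to the fiberwise scalar estimate $\|\varphi_\delta \int_0^t g\|_{H^b_t}\lesssim\delta^{1-b+b'}\|g\|_{H^{b'}_t}$, Taylor‑expand in the low range, integrate by parts in $\lambda$ plus a constant correction in the high range) is the standard Ginibre--Tsutsumi--Velo argument and is correct, so you and the paper are in agreement; the only small imprecision is the remark that the Taylor series is ``dominated by its first term''--in fact every term contributes the same power $\delta^{1-b+b'}$ and it is the $1/k!$ that gives summability, which is harmless.
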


Consider the local problem \eqref{ee} with the local existence time $\delta$. We will quantify the dependence of $\delta$ to an initial data which we use in the proof of Theorem \ref{mainthm}. From the dilation symmetry of this equation, assuming that $(u,\phi)$ solve \eqref{ee} with initial data $u_0$ on $[0,\lambda^{-2}]$, we come up with the symmetry solutions $$u^{\lambda}(x,y,t)=\lambda^{-1}u(x/\lambda,y/\lambda,t/\lambda^2),\,\,\phi^{\lambda}(x,y,t)=\lambda^{-1}\phi(x/\lambda,y/\lambda,t/\lambda^2)$$ with data $u^{\lambda}_0(x,y)=\lambda^{-1}u_0(x/\lambda,y/\lambda)$ which solve the equation on $[0,1]$. Thus for $\lambda>1$, by comparing the $H^s$ norms of $u_0$ and $u_0^{\lambda}$,  the solution $(u,\phi)$ can be defined with respect to the local existence time
\begin{align}\label{existtime}
\delta\sim (C+\norm{u_0}_{H^s})^{-\frac{2}{s}}
\end{align} where $C=C(\norm{u_0}_{L^2})$.

Next we discuss a global wellposedness result for the elliptic-elliptic problem.  We will exploit it once iterating our local result. Note that recalling the Sobolev index range $s>1/2$ in our case, we indeed need a result that at least covers this range. Hence rewriting the elliptic-elliptic IVP   
\begin{align}\label{elel}
\begin{cases}
iu_t +\Delta u=c_1|u|^2u+c_2K(|u|^2)u\\ u(x,0)=u_0(x)\in H^s_x(\mathbb{R}^2), 
\end{cases}
\end{align}
with a multiplier operator $K$ given by \begin{align}\label{ell.K}K(f)=\mathcal{F}^{-1}\frac{\xi_1^2}{|\xi|^2} \mathcal{F}f
\end{align} for $\xi=(\xi_1,\xi_2)\neq 0$, the required global well-posedness result for \eqref{elel} and \eqref{ell.K} with a growth bound is as follows:

\begin{theorem}[See \cite{Yang}]\label{global}
Let $c_1+c_2>0.$ For any $1>s>\frac{2}{5}$, the initial value problem \eqref{elel} $\&$ \eqref{ell.K} is globally well-posed in $H^s(\mathbb{R}^2)$. Furthermore, there is a growth bound $$\sup_{t\in[0,T]}\norm{u(t)}_{H^s(\mathbb{R}^2)}\leq C(1+T)^{\frac{3s(1-s)}{2(5s-2)}+}$$ where the constant $C$ depends only on the index $s$, $\norm{u_0}_{L^2}$.

\end{theorem}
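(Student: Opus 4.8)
Since this statement is quoted from \cite{Yang}, the plan is to reproduce the $I$-method argument carried out there. At regularity $s=1$ the theorem is classical: the mass $\norm{u(t)}_{L^2}$ and the Hamiltonian
\[
H(u)=\tfrac12\int_{\mathbb{R}^2}|\nabla u|^2\,dxdy+\tfrac{c_1}{4}\int_{\mathbb{R}^2}|u|^4\,dxdy+\tfrac{c_2}{4}\int_{\mathbb{R}^2}K(|u|^2)\,|u|^2\,dxdy
\]
are conserved, $K$ is a nonnegative bounded Fourier multiplier, and so under $c_1+c_2>0$ the sharp Gagliardo--Nirenberg inequality together with conservation of mass keeps $\norm{\nabla u(t)}_{L^2}$ bounded for all times; hence it suffices to treat $\tfrac25<s<1$, where the polynomial bound is the real content.

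First I would introduce, for a large parameter $N\gg1$, the smoothing multiplier $I=I_N$ whose symbol $m_N(\xi)$ equals $1$ for $|\xi|\le N$, equals $(|\xi|/N)^{s-1}$ for $|\xi|\ge 2N$, and is smooth, radial and monotone in between; it obeys $\norm{u}_{H^s}\lesssim\norm{Iu}_{H^1}\lesssim N^{1-s}\norm{u}_{H^s}$. Using the $L^2$-critical scaling $u\mapsto u^\lambda(x,y,t)=\lambda^{-1}u(x/\lambda,y/\lambda,t/\lambda^2)$, which again solves \eqref{elel}--\eqref{ell.K}, I would choose $\lambda=\lambda(N)$ so that $\norm{Iu_0^\lambda}_{H^1}$ is at most a fixed small constant; a short frequency computation forces $\lambda\sim N^{(1-s)/s}$ up to constants involving the data. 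Running the $X^{1,b}$ contraction ($b$ slightly above $\tfrac12$) on the equation satisfied by $Iu^\lambda$ --- invoking Lemma \ref{delta} for the Duhamel term, the trilinear estimate for $(c_1I+c_2K)(u\overline{v})w$ at the level $s=1$, and the standard interpolation lemma transferring such multilinear estimates to their $I$-modified versions with constants uniform in $N$ --- then produces a solution on $[0,1]$ with $\norm{Iu^\lambda}_{X^{1,b}_{[0,1]}}$ comparable to $\norm{Iu_0^\lambda}_{H^1}$.

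The core is the almost conservation law
\[
\sup_{0\le t\le1}\bigl|\,H(Iu^\lambda(t))-H(Iu^\lambda(0))\,\bigr|\ \lesssim\ N^{-\alpha}\,\norm{Iu^\lambda}_{X^{1,b}_{[0,1]}}^{k}
\]
for a suitable $\alpha>0$ and some integer $k\ge4$. To prove it I would differentiate $H(Iu^\lambda)$ in time, substitute the equation, and rewrite the increment as a finite sum of space-time integrals of products of (at least four) factors of $u^\lambda$ against Fourier multipliers of the shape $1-\frac{m_N(\xi_1+\cdots)}{m_N(\xi_1)\cdots}$, which vanish unless at least two of the frequencies exceed $N$; these multilinear forms are then estimated by the bilinear and $L^4_{t,x}$ Strichartz (Bourgain-space) bounds in two dimensions. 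To push $\alpha$ large enough to reach $s>\tfrac25$ one must, as in \cite{Yang}, pass to a corrected modified energy that cancels the worst resonant interactions. Throughout, the nonlocal term $c_2K(|u|^2)u$ is harmless since $K$ is bounded, nonnegative and commutes with $I$ up to acceptable errors. I expect this almost conservation estimate --- the interplay between the sharp two-dimensional multilinear estimates and the construction of the corrected energy --- to be the main obstacle.

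Finally I would iterate. Covering the original interval $[0,T]$ amounts to covering $[0,\lambda^2T]$ in rescaled time, i.e.\ to performing $\sim\lambda^2T$ unit-length steps, so the modified energy drifts by $\lesssim\lambda^2T\,N^{-\alpha}$; requiring this to be $\ll1$ keeps $H(Iu^\lambda(t))=O(1)$, and then conservation of mass together with the coercivity coming from $c_1+c_2>0$ gives $\norm{Iu^\lambda(t)}_{H^1}\lesssim1$ on $[0,\lambda^2T]$. Undoing $I$ and the scaling yields $\sup_{[0,T]}\norm{u(t)}_{H^s}\lesssim\lambda^{s}$; the admissibility requirement $\lambda^2T\,N^{-\alpha}\ll1$ with $\lambda\sim N^{(1-s)/s}$ reads $T\lesssim N^{\alpha-2(1-s)/s}$, which is satisfiable for every $T$ exactly when that exponent is positive, i.e.\ precisely in the asserted range of $s$. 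Taking $N$ as small as this last condition allows and substituting $\lambda^s\sim N^{1-s}$ then produces the stated growth bound $\sup_{[0,T]}\norm{u(t)}_{H^s}\lesssim(1+T)^{\frac{3s(1-s)}{2(5s-2)}+}$, with constant depending only on $s$ and $\norm{u_0}_{L^2}$.
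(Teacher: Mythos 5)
This theorem is quoted from \cite{Yang}; the paper you are working from does not prove it, but it does describe its essential ingredients as the \emph{interaction Morawetz estimate} together with the almost conservation of the $I$-modified energy. Your sketch contains only the second of these. What you write out --- rescale to make $\norm{Iu_0^\lambda}_{H^1}$ small, run an $X^{1,b}$ local theory, prove an almost conservation law for $H(Iu^\lambda)$ with gain $N^{-\alpha}$, and iterate $\sim\lambda^2 T$ unit time steps --- is precisely the first-generation $I$-method scheme of Shen--Guo (\cite{Guo}), which for this system only reaches $s>4/7$. Appealing vaguely to ``a corrected modified energy'' does not fix this: the paper singles out the Morawetz estimate, not higher-order energy corrections, as the mechanism that closes the gap down to $s>2/5$.

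You can also see the discrepancy in the arithmetic of your own sketch. With $\lambda\sim N^{(1-s)/s}$ the admissibility condition $\lambda^2 T N^{-\alpha}\ll1$ forces $T\lesssim N^{\alpha-2(1-s)/s}$, and undoing the scaling gives $\norm{u(T)}_{H^s}\lesssim N^{1-s}$, hence a growth exponent $(1-s)/(\alpha-2(1-s)/s)$. For the threshold of validity to be $s>2/5$ one needs a fixed $\alpha=3$, but then the exponent becomes $\frac{s(1-s)}{5s-2}$, which is off by a factor of $\tfrac32$ from the stated $\frac{3s(1-s)}{2(5s-2)}$; and to force the stated exponent one would need $\alpha=\frac{4s+2}{3s}$, which is $s$-dependent and hence cannot come from a single almost conservation estimate. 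This mismatch is the fingerprint of the missing ingredient: in \cite{Yang} the interaction Morawetz inequality (applied to $Iu$) gives an $L^4_{t,x}$ spacetime bound on long intervals, which decouples the count of local-theory steps from the naive $\lambda^2 T$ and yields both the $2/5$ threshold and the quoted exponent. Without it, your iteration scheme cannot reproduce the theorem as stated.
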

The essential ingredients of the proof of Theorem \ref{global} are the interaction Morawetz type estimate and the almost conservation of the modified energy obtained from plugging the smoothing of order $1-s$ operator $\widehat{Iu}(\xi)=m(\xi)\widehat{u}(\xi)$, \begin{align*}\label{multiplier}
m(\xi):=
\begin{cases}
\hspace{0.2cm}1, \hspace{1.5cm}|\xi|<N \\
\Big(\frac{|\xi|}{N}\Big)^{s-1}, \hspace{0.47cm} |\xi|>2N
\end{cases}
\end{align*} in the usual Hamiltonian energy of the equation \eqref{elel}. The globalizing technique used in \cite{Yang} is called the $I$-method, in view of the operator $I:H^s\rightarrow H^1$, which was introduced by Colliander--Keel--Staffilani--Takaoka--Tao \cite{Colliander}. 
Next by virtue of Theorem \ref{global}, we reserve the following growth bound to be used later.  For $s>\frac{1}{2}$, define $\beta(s)\geq\frac{3s(1-s)}{2(5s-2)}$ so that we have the a priori estimate 
\begin{equation}
\norm{u(t)}_{H^s}\lesssim \langle t \rangle^{\beta(s)}=:T(t) \label{T(t)}
\end{equation}
for some non-decreasing function $T(t)$ (which we need in iterating the local result), and where the implicit constant depends on the Sobolev index and $L^2$ norm of the initial data. 

We now consider the forced and weakly damped Davey--Stewartson equation \eqref{elelforced} that can be reduced to the single equation 
\begin{align}\label{forcedsingle}
iu_t +\Delta u+i\delta u=c_1|u|^2u+c_2K(|u|^2)u +f    
\end{align}
with the same multiplier operator $K$ as in \eqref{ell.K}. Below we list a few properties of $K$ to be used in the energy calculations of \eqref{forcedsingle}:
\begin{enumerate}\label{k}
    \item[(i)] $K$ is a bounded linear operator on $L^p$, $1<p<\infty$,
    \item[(ii)] $\overline{K(\psi)}=K(\overline{\psi})$,
    \item[(iii)] $\int K(\psi)\varphi=\int \psi K(\varphi)$.
\end{enumerate} As a consequence of smoothing estimate, we will study the existence of global atractors for the dissipative DS equation. For a general discussion, let $H$ be a phase space, and assuming the existence of a global wellposed problem in $H$, we also let $U(t): H\rightarrow H$ denote the evolution operator, mapping data to solution. Our goal is to describe the long time asymptotics of the solution by a compact invariant subset of $H$. The following definitions and result will give the features of this invariant subset which we need in establishing the proof of Theorem \ref{thm:attractor}.
\begin{Definition}[\cite{Temam}]
We say that a compact subset $\mathcal{A}$ of the phase space $H$ is a global attractor for the semigroup $\{U(t)\}_{t\geq 0}$ if $U(t)\mathcal{A}=\mathcal{A}$ for all $t>0$, and $$\lim\limits_{t\rightarrow \infty}d(U(t)g,\mathcal{A})=0,\,\,\forall g\in H.$$
\end{Definition}
Next we define a bounded subset of a phase space which is of particular importance, called an absorbing set, and all solutions enter into this set after a certain time. The existence of this set can be verified by using the energy estimates for the equation. 
\begin{Definition}[\cite{Temam}]
A bounded subset $\mathcal{B}$ of a phase space $H$ is called an absorbing set if for any bounded $\mathcal{S}\subset H$, there exists $T=T(\mathcal{S})$ such that $U(t)\mathcal{S}\subset \mathcal{B}$ for all $t\geq T$.
\end{Definition}
Note that a global attractor for a semigroup, if exists, implies the existence of an absorbing set whereas the converse of this argument holds still true whenever a semigroup $\{U(t)\}_{t\geq 0}$ satisfies certain criterion:
\begin{theorem}[\cite{Temam}]\label{attractortemam}
Suppose that $H$ is a metric space and $U(t):H\rightarrow H$ is a continuous semigroup defined for all $t\geq 0$. Furthermore suppose that there is an absorbing set $\mathcal{B}$. If the semigroup $\{U(t)\}_{t\geq 0}$ is asymptotically compact, that is, for every bounded sequence $\{x_k\}$ in $H$ and every sequence of times $t_k\rightarrow \infty$, the set $\{U(t_k)x_k\}_k$ is relatively compact in $H$, then the $\omega$-limit set $$\omega(\mathcal{B})=\bigcap\limits_{s\geq0}\overline{\bigcup\limits_{t\geq s} U(t)\mathcal{B}}$$
is a global attractor, where the closure is taken on $H$.
\end{theorem}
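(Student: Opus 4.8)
\textit{Proof proposal.} The plan is to work directly with the dynamical description of the $\omega$-limit set,
$$\omega(\mathcal{B})=\{y\in H:\ \exists\, t_k\to\infty,\ \exists\, x_k\in\mathcal{B}\ \text{with}\ U(t_k)x_k\to y\},$$
and first to check that this coincides with the intersection $\bigcap_{s\geq0}\overline{\bigcup_{t\geq s}U(t)\mathcal{B}}$ appearing in the statement. This is a purely set-theoretic verification: if $U(t_k)x_k\to y$ with $t_k\to\infty$ then for each fixed $s$ we have $U(t_k)x_k\in\bigcup_{t\geq s}U(t)\mathcal{B}$ for all large $k$, so $y$ lies in every $\overline{\bigcup_{t\geq s}U(t)\mathcal{B}}$; conversely, membership in every such closure produces, for each $n$, a time $t_n\geq n$ and a point $x_n\in\mathcal{B}$ with $d(U(t_n)x_n,y)<1/n$, whence $y\in\omega(\mathcal{B})$. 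Throughout the argument, asymptotic compactness is the only genuinely new hypothesis, so the strategy is to route every compactness claim through it.

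First I would show $\omega(\mathcal{B})$ is nonempty and compact. Nonemptiness is immediate: pick any $x\in\mathcal{B}$ and any sequence $t_k\to\infty$; asymptotic compactness extracts a convergent subsequence of $\{U(t_k)x\}$, whose limit lies in $\omega(\mathcal{B})$. For compactness, since $H$ is a metric space it suffices to prove sequential compactness. Given $\{y_n\}\subset\omega(\mathcal{B})$, I would use the defining property of each $y_n$ together with a diagonal selection to produce $t_n\to\infty$ and $x_n\in\mathcal{B}$ with $d(U(t_n)x_n,y_n)<1/n$; asymptotic compactness then yields a subsequence along which $U(t_n)x_n\to y$ for some $y\in H$, and since $d(U(t_n)x_n,y_n)\to0$ we get $y_n\to y$ along the same subsequence, with $y\in\omega(\mathcal{B})$.

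Next I would establish that $\omega(\mathcal{B})$ attracts $\mathcal{B}$, i.e. $\operatorname{dist}(U(t)\mathcal{B},\omega(\mathcal{B}))\to0$, and then upgrade to all bounded sets and to all of $H$. If attraction of $\mathcal{B}$ failed, there would exist $\varepsilon>0$, $t_k\to\infty$, and $x_k\in\mathcal{B}$ with $d(U(t_k)x_k,\omega(\mathcal{B}))\geq\varepsilon$; asymptotic compactness produces a subsequential limit $y$ of $U(t_k)x_k$, which by definition lies in $\omega(\mathcal{B})$, a contradiction. For a bounded set $\mathcal{S}\subset H$, the absorbing property gives $T=T(\mathcal{S})$ with $U(T)\mathcal{S}\subset\mathcal{B}$, so for $t\geq T$ the semigroup law gives $U(t)\mathcal{S}=U(t-T)\bigl(U(T)\mathcal{S}\bigr)\subset U(t-T)\mathcal{B}$, hence $\operatorname{dist}(U(t)\mathcal{S},\omega(\mathcal{B}))\leq\operatorname{dist}(U(t-T)\mathcal{B},\omega(\mathcal{B}))\to0$ as $t\to\infty$; taking $\mathcal{S}=\{g\}$ gives $d(U(t)g,\omega(\mathcal{B}))\to0$ for every $g\in H$, which is the attraction property required of a global attractor.

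Finally I would prove the invariance $U(t)\omega(\mathcal{B})=\omega(\mathcal{B})$ for every $t\geq0$. The forward inclusion uses only continuity of $U(t)$: if $U(t_k)x_k\to y$ then $U(t+t_k)x_k=U(t)U(t_k)x_k\to U(t)y$ with $t+t_k\to\infty$, so $U(t)y\in\omega(\mathcal{B})$. The reverse inclusion $\omega(\mathcal{B})\subset U(t)\omega(\mathcal{B})$ is where asymptotic compactness is used once more, to ``back up in time'': given $y\in\omega(\mathcal{B})$ with $U(t_k)x_k\to y$, consider $U(t_k-t)x_k$ for $k$ large; a subsequential limit $z$ belongs to $\omega(\mathcal{B})$, and by continuity $U(t)z=\lim U(t)U(t_k-t)x_k=\lim U(t_k)x_k=y$, so $y\in U(t)\omega(\mathcal{B})$. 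Combining nonemptiness, compactness, invariance, and the attraction property, $\omega(\mathcal{B})$ is a global attractor. I expect the main obstacle to be conceptual rather than computational: one must avoid trying to compactify via boundedness of $\mathcal{B}$ — which in an infinite-dimensional space such as $H^1(\mathbb{R}^2)$ gives no precompactness — and instead feed every sequence into the asymptotic compactness hypothesis, with the diagonal selection in the compactness step and the time-shift in the backward-invariance step being the two places demanding care.
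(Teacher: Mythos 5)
Your argument is correct: the sequential characterization of $\omega(\mathcal{B})$, the extraction of limits via asymptotic compactness for nonemptiness, compactness, and attraction (with the absorbing set reducing arbitrary bounded sets, hence points, to $\mathcal{B}$), and the time-shift argument for backward invariance are all sound and complete. Note that the paper itself offers no proof of this statement --- it is quoted from Temam's book --- and your proof is essentially the standard textbook argument found there, so there is nothing to reconcile between the two.
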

By the above theorem, the problem of existence of a global attractor boils down to finding an absorbing ball with which one needs to prove asymptotic compactness. 

The theory of existence of global attractors is only meaningful for systems as $t\rightarrow \infty$. Thus to see that the equation \eqref{elelforced} is globally wellposed in the energy space, we can proceed in a similar fashion to the proof of the existence of an absorbing set in section $2$ of \cite{Wang} to obtain the following a priori estimate: 
\begin{align}\label{globalH1}
\norm{u(t)}_{H^1}\leq Ae^{-Bt}+C,\,\,t>0,    \end{align}
where  $A=A(\norm{f}_{L^2},\norm{u_0}_{H^1})$, $B=B(\delta)>0$, $C=C(\delta, \norm{f}_{L^2})$. Note that \eqref{globalH1} implies the existence of an absorbing ball with radius $C=C(\delta, \norm{f}_{L^2})$ as well.

\section{Main Method}\label{method}
In this section we discuss Tao's $[k;Z]$ multipliers method \cite{Tao} so as to study the estimate of multilinear expression associated to the elliptic-elliptic type of Davey-Stewartson system. Suppose that $Z$ is any additive abelian group with an invariant measure $d\xi$, as an example, one can take $Z=\mathbb{R}^{n+1}$ with Lebesgue measure or $\mathbb{Z}^n\times \mathbb{R}$ with the product of counting and Lebesgue measures. Let $k\geq 2$ be any integer, and let $\Gamma_k(Z)\subset Z^k$ denote the hyperplane 
$$\Gamma_k(Z):=\{(\xi_1,..., \xi_k)\in Z^k : \xi_1+...+\xi_k=0\}$$ endowed with the measure $$\int_{\Gamma_k(Z)}f=\int_{Z^{k-1}}f(\xi_1,...,\xi_{k-1},-\xi_1-...-\xi_{k-1})\,\text{d}\xi_1...\text{d}\xi_{k-1}.$$ 
\begin{Definition}
 A complex valued function $m:\Gamma_k(Z)\rightarrow \mathbb{C}$ is called the $[k;Z]$ multiplier if the inequality
\begin{align*}
\Big|\int_{\Gamma_k(Z)}m(\xi_1,...,\xi_k)\prod_{i=1}^kf_i(\xi_i)\Big|\leq \norm{m}_{[k;Z]}\prod_{i=1}^k\norm{f_i}_{L^2(Z)}
\end{align*}
holds for all test functions $f_i$ on $Z$ and the best constant, denoted by $\norm{m}_{[k;Z]}$. 
\end{Definition}
Note that $\norm{\cdot}_{[k;Z]}$ determines a norm on the $[k;Z]$ multipliers $m$. Since  multilinear estimates might boil down to bilinear estimates of some sort (reason later), we emphasize the case $k=3$ which specifically associates to the bilinear estimates for our Schr\"odinger type equation. So we write
\begin{align}
&\xi_1+\xi_2+\xi_3=0,\ \ \ \ \tau_1+\tau_2+\tau_3=0,\label{xi,tau}\\& \lambda_j:= \tau_j+h_j(\xi_j), \ \ \ \ h_j(\xi_j)=\pm |\xi_j|^2 \ \ \text{for}\ \ 1\leq j \leq 3. \label{lambda,h}
\end{align}
Here $\lambda_j$ measures how close in frequency the $j^{th}$ wave is to a free solution. It is remarkable to note that for $d\geq 2$, whenever the two of the frequencies $\xi_1,\xi_2,\xi_3$ form an orthogonal pair, $\lambda_j$'s simultaneously vanish. Thus in order to take care of this situation we introduce the function $h: \Gamma_3(\mathbb{R}^2)\rightarrow \mathbb{R}$  defined by \begin{align}\label{resonancefnc}
h(\xi_1,\xi_2,\xi_3):= h_1(\xi_1)+h_2(\xi_2)+h_3(\xi_3)=\lambda_1+\lambda_2+\lambda_3
\end{align}
which measures to what extent the frequencies $\xi_1,\xi_2,\xi_3$
resonate with each other so let it be referred as a resonance function. The domain on which the resonance function may vanish depends on the sign of Schr\"odinger dispersion relation $h_j(\xi_j)=\pm|\xi_j|^2$. Up to symmetry we have two possibilities: the $(+++)$ case
\begin{align}\label{+++}
h_1(\xi)=h_2(\xi)= h_3(\xi)= |\xi|^2
\end{align} 
and the $(++-)$ case
\begin{align}\label{++-}
h_1(\xi)=h_2(\xi)= |\xi|^2, h_3(\xi)= -|\xi|^2.
\end{align} Comparing the two cases, analysis of the first one is rather easier since the resonance function \begin{align*}
h(\xi_1,\xi_2,\xi_3)=|\xi_1|^2+|\xi_2|^2+|\xi_3|^2
\end{align*} vanishes only at the origin. As for the second case, the resonance function \begin{align*}
h(\xi_1,\xi_2,\xi_3)=|\xi_1|^2+|\xi_2|^2-|\xi_3|^2
\end{align*}
can vanish whenever the frequencies $\xi_1$ and $\xi_2$ become orthogonal. 
More precisely, for $(\xi_1,\xi_2,\xi_3) \in \Gamma_3(\mathbb{R}^2)$, we write
\begin{align*}
|h(\xi_1,\xi_2,\xi_3)|=||\xi_1|^2+|\xi_2|^2-|\xi_3|^2|=2|\xi_1\cdot\xi_2|\sim |\xi_1||\xi_2||\pi/2-\angle (\xi_1,\xi_2)|
\end{align*} 
where $\angle (\xi_1,\xi_2)$ denotes the angle between $\xi_1$ and $\xi_2$. So the extent to which $\xi_1$ and $\xi_2$ get closer to being orthogonal, the more rapidly resonance function tends to vanish. At this point we assume that \begin{align} \label{h estimate}
|h(\xi_1,\xi_2,\xi_3)|\lesssim |\xi_1||\xi_2|,
\end{align}
and 
\begin{align} \label{angle estimate}
\angle(\xi_1,\xi_2)=\pi/2+O(\frac{|h(\xi_1,\xi_2,\xi_3)|}{|\xi_1||\xi_2|}).
\end{align}
We will estimate the $[3;\mathbb{R}^3]$ norm of multipliers by making use of the dyadic decomposition of the variables $\xi_j$, $\lambda_j$ and the function $h(\xi_1,\xi_2,\xi_3)$. Thus we use the capitalized variables $N_j$, $L_j$  and $H$ to denote the magnitude of the pieces into which the variables $\xi_j$, $\lambda_j$ and the resonance function $h$ decomposed respectively. Here these variables are assumed to be dyadic, that is, they range over the numbers of the form $2^k$, $k\in \mathbb{Z}$. In terms of sizes of the variables $N_j>0$, $j=1,2,3$, we write $N_{\min}\leq N_{\text{med}} \leq N_{\max}$ to denote the minimum, median and maximum of $N_1,N_2,N_3$. This, in its own right, saves us from repetitive analysis and reduces the number of cases substantially. Likewise we define $L_{\min}\leq L_{\text{med}} \leq L_{\max}$ for $L_j>0$, $j=1,2,3.$ Next we make some assumptions on the sizes of these variables. Before doing so, we need to state several lemmas from \cite{Tao}.
\begin{lemma}[Comparison Principle]\label{comp.prin.}
Let $m$ and $M$ be $[k;Z]$ multipliers, if for all $\xi\in \Gamma_k(Z)$ $|m(\xi)|\leq M(\xi)$, then  $\norm{m}_{[k;Z]}\leq \norm{M}_{[k;Z]}$. Furthermore if $a_1,..., a_k$ are real valued functions on $Z$ and $m$ is a $[k;Z]$ multiplier, then
\begin{align*}
\norm{m(\xi)\prod_{i=1}^ka_i(\xi_i)}_{[k;Z]}\leq \norm{m}_{[k;Z]}\prod_{i=1}^k\norm{a_i}_{L^{\infty}(Z)}.
\end{align*}
\end{lemma}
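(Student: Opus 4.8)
The plan is to derive both assertions directly from the defining inequality for the $[k;Z]$ norm, so that the entire argument reduces to pointwise domination of integrands together with the triangle inequality for integrals.

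First I would treat the comparison statement. The hypothesis $|m(\xi)|\leq M(\xi)$ forces $M\geq 0$ on $\Gamma_k(Z)$. Given arbitrary test functions $f_1,\dots,f_k$ on $Z$, I would estimate
$$\Big|\int_{\Gamma_k(Z)}m(\xi)\prod_{i=1}^k f_i(\xi_i)\Big|\leq \int_{\Gamma_k(Z)}|m(\xi)|\prod_{i=1}^k|f_i(\xi_i)|\leq \int_{\Gamma_k(Z)}M(\xi)\prod_{i=1}^k|f_i(\xi_i)|.$$
The last integrand is non-negative, so that integral equals its own absolute value, and applying the definition of $\norm{M}_{[k;Z]}$ to the (non-negative) test functions $|f_i|$ bounds it by $\norm{M}_{[k;Z]}\prod_{i}\norm{|f_i|}_{L^2(Z)}=\norm{M}_{[k;Z]}\prod_{i}\norm{f_i}_{L^2(Z)}$. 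Since the $f_i$ were arbitrary and $\norm{m}_{[k;Z]}$ is by definition the smallest constant for which the defining inequality holds, this yields $\norm{m}_{[k;Z]}\leq\norm{M}_{[k;Z]}$.

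For the second assertion I would dispose of the trivial cases in which some $\norm{a_i}_{L^\infty(Z)}$ is zero or infinite, and then set $b_i:=a_i/\norm{a_i}_{L^\infty(Z)}$, so that $|b_i|\leq 1$. For arbitrary test functions $f_i$ I would factor
$$\Big|\int_{\Gamma_k(Z)}m(\xi)\prod_{i=1}^k a_i(\xi_i)f_i(\xi_i)\Big|=\Big(\prod_{i=1}^k\norm{a_i}_{L^\infty(Z)}\Big)\Big|\int_{\Gamma_k(Z)}m(\xi)\prod_{i=1}^k (b_if_i)(\xi_i)\Big|,$$
note that each $b_if_i$ is again a test function with $\norm{b_if_i}_{L^2(Z)}\leq\norm{f_i}_{L^2(Z)}$, and apply the definition of $\norm{m}_{[k;Z]}$ to the remaining integral. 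Combining the two displays, and again invoking that the $[k;Z]$ norm is the best constant in its defining inequality, gives $\norm{m(\xi)\prod_i a_i(\xi_i)}_{[k;Z]}\leq\norm{m}_{[k;Z]}\prod_i\norm{a_i}_{L^\infty(Z)}$.

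The computation is essentially routine; the only delicate point I anticipate is the legitimacy of replacing $f_i$ by $|f_i|$ in the first part. I would justify this either by a standard density argument (the defining inequality passes from a dense class of test functions to their moduli via monotone convergence), or simply by observing that the defining inequality for $\norm{M}_{[k;Z]}$ is already invariant under replacing each $f_i$ by its modulus.
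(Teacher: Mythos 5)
The paper does not actually supply a proof of Lemma \ref{comp.prin.}; it quotes the statement directly from Tao's paper, where it is recorded as an immediate consequence of the definition. Your direct argument is exactly that standard one and it is correct: pointwise domination together with the best-constant characterization of the $[k;Z]$ norm gives the first part, and normalizing $b_i=a_i/\norm{a_i}_{L^\infty(Z)}$ so that $|b_i|\leq 1$ and $\norm{b_if_i}_{L^2}\leq\norm{f_i}_{L^2}$ gives the second, with the degenerate cases disposed of as you say. One small correction on wording: in the step where you pass to $|f_i|$, what you actually need is that the class of admissible test functions is closed under taking moduli (or that the defining inequality extends by density from such a class to all of $L^2$), not that the defining inequality ``is invariant'' under $f_i\mapsto|f_i|$ --- its left-hand side does change in general. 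Your density argument is the cleaner of the two justifications you propose.
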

\begin{lemma}\label{av}
For any $\xi_0 \in \Gamma_k(Z)$ and any $[k;Z]$ multiplier $m$, we have the translation invariance of the norm
\begin{align}\label{trans.invar.}
\norm{m(\xi+\xi_0)}_{[k;Z]}=\norm{m(\xi)}_{[k;Z]}
\end{align}
also we have the averaging estimate 
\begin{align}\label{aver.}
\norm{m*\mu}_{[k;Z]}\leq \norm{m}_{[k;Z]}\norm{\mu}_{L^1(\Gamma_k(Z))}
\end{align}
for any finite measure $\mu$ on $\Gamma_k(Z).$
\end{lemma}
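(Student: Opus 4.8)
\textbf{Plan of proof for Lemma \ref{av}.}

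The plan is to prove the two assertions separately, both by reducing to the definition of the $[k;Z]$ multiplier norm as the best constant in the defining inequality, and exploiting the translation invariance of the Haar measure on the abelian group $Z$ (hence on the hyperplane $\Gamma_k(Z)$).

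First I would establish the translation invariance \eqref{trans.invar.}. Fix $\xi_0 = (\xi_0^{(1)},\dots,\xi_0^{(k)}) \in \Gamma_k(Z)$, so $\xi_0^{(1)}+\cdots+\xi_0^{(k)} = 0$. Given test functions $f_1,\dots,f_k$ on $Z$, I would write out $\int_{\Gamma_k(Z)} m(\xi+\xi_0)\prod_{i=1}^k f_i(\xi_i)$ and perform the change of variables $\eta_i = \xi_i + \xi_0^{(i)}$. Because each shift is by a fixed group element, the invariant measure $d\xi_i$ on $Z$ is preserved; moreover the constraint $\xi_1+\cdots+\xi_k = 0$ transforms into $\eta_1+\cdots+\eta_k = \xi_0^{(1)}+\cdots+\xi_0^{(k)} = 0$, so the new variables again range over $\Gamma_k(Z)$ with the same induced measure. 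Thus the integral equals $\int_{\Gamma_k(Z)} m(\eta)\prod_{i=1}^k f_i(\eta_i - \xi_0^{(i)}) = \int_{\Gamma_k(Z)} m(\eta)\prod_{i=1}^k g_i(\eta_i)$ where $g_i(\eta_i) := f_i(\eta_i - \xi_0^{(i)})$. Since translation is an isometry on $L^2(Z)$, $\norm{g_i}_{L^2(Z)} = \norm{f_i}_{L^2(Z)}$. Hence the best constant for $m(\xi+\xi_0)$ over all $f_i$ equals the best constant for $m(\xi)$ over all $g_i$, which gives \eqref{trans.invar.}. (One subtlety: the change of variables on $\Gamma_k(Z)$ using the chart $\xi_k = -\xi_1-\cdots-\xi_{k-1}$ should be checked to be consistent with the stated measure $\int_{\Gamma_k(Z)} f = \int_{Z^{k-1}} f(\xi_1,\dots,\xi_{k-1}, -\xi_1-\cdots-\xi_{k-1})\,d\xi_1\cdots d\xi_{k-1}$; this is immediate since the first $k-1$ coordinates shift by fixed elements and the invariance of $d\xi_1\cdots d\xi_{k-1}$ does the rest.)

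Next I would prove the averaging estimate \eqref{aver.}. Let $\mu$ be a finite (say, complex or positive) measure on $\Gamma_k(Z)$ and write $m * \mu(\xi) = \int_{\Gamma_k(Z)} m(\xi - \xi_0)\,d\mu(\xi_0)$. Plugging this into the defining bilinear form and using Fubini (justified since $m \in [k;Z]$ is bounded in the relevant pairing, $\mu$ is finite, and the $f_i$ are test functions), I get
\begin{align*}
\Big|\int_{\Gamma_k(Z)} (m*\mu)(\xi)\prod_{i=1}^k f_i(\xi_i)\Big|
&= \Big|\int_{\Gamma_k(Z)} \Big(\int_{\Gamma_k(Z)} m(\xi-\xi_0)\prod_{i=1}^k f_i(\xi_i)\Big)\,d\mu(\xi_0)\Big| \\
&\leq \int_{\Gamma_k(Z)} \Big|\int_{\Gamma_k(Z)} m(\xi-\xi_0)\prod_{i=1}^k f_i(\xi_i)\Big|\,d|\mu|(\xi_0).
\end{align*}
For each fixed $\xi_0 \in \Gamma_k(Z)$, the inner integral is bounded by $\norm{m(\cdot - \xi_0)}_{[k;Z]}\prod_{i=1}^k \norm{f_i}_{L^2(Z)}$, and by the translation invariance just proved, $\norm{m(\cdot - \xi_0)}_{[k;Z]} = \norm{m}_{[k;Z]}$. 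Pulling this constant out of the $d|\mu|$ integral leaves $\norm{m}_{[k;Z]}\,\norm{\mu}_{L^1(\Gamma_k(Z))}\prod_{i=1}^k \norm{f_i}_{L^2(Z)}$. Taking the supremum over all test functions $f_i$ with $\norm{f_i}_{L^2(Z)} \le 1$ yields $\norm{m*\mu}_{[k;Z]} \le \norm{m}_{[k;Z]}\norm{\mu}_{L^1(\Gamma_k(Z))}$, which is \eqref{aver.}.

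The only genuine obstacle is bookkeeping rather than mathematics: one must be careful that the convolution $m * \mu$ on the hyperplane $\Gamma_k(Z)$ is well defined and that interchanging the order of integration is legitimate. Both are handled by noting that $m$ acts boundedly in the multilinear pairing against $L^2$ test functions, $\mu$ has finite total variation, and everything in sight is absolutely integrable; density of test functions then extends the conclusion to the full norm. No deep input beyond the translation invariance of the group measure is needed, so the proof is short.
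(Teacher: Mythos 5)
Your proposal is correct and follows essentially the same route as the paper: swap the order of integration (Minkowski/Fubini), bound the inner pairing by the multiplier norm of the translated symbol, and invoke translation invariance of the norm—which you prove in detail via the change of variables that the paper leaves as "immediate." No gaps; if anything, your version is slightly more careful (using $|\mu|$ and spelling out the measure invariance on $\Gamma_k(Z)$).
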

\begin{proof}[Proof]
Translation invariance immediately follows by the definition of a $[k;Z]$ multipliers. For the averaging estimate, for all test functions $f_i$, bearing in mind that $\mu$ is a finite measure on $\Gamma_k(Z)$ and then application of Minkowski integral inequality with $p=1$ leads to \begin{multline*}
\Big|\int_{\Gamma_k(Z)}m*\mu(\xi)\prod_{i=1}^kf_i(\xi_i)\Big|=\Big|\int_{\Gamma_k(Z)}\Big(\int_{\Gamma_k(Z)}m(\xi-\eta)\,\text{d}\mu(\eta)\Big)\prod_{i=1}^kf_i(\xi_i)\Big| \\ \leq \Big|\int_{\Gamma_k(Z)}\Big(\int_{\Gamma_k(Z)}m(\xi-\eta)\prod_{i=1}^kf_i(\xi_i)\Big)\,\text{d}\mu(\eta)\Big|
\end{multline*}
Next since $m$ is a $[k;Z]$ multipliers we estimate the inner integral by using the definition, and hence the total integral is majorized by
\begin{align*}
\prod_{i=1}^k \norm{f_i}_{L^2(Z)}\int_{\Gamma_k(Z)}\norm{m(\xi-\eta)}_{[k;Z]}\,\text{d}\mu(\eta)=\norm{m(\xi)}_{[k;Z]}\norm{\mu}_{L^1(\Gamma_k(Z))}\prod_{i=1}^k \norm{f_i}_{L^2(Z)}
\end{align*}
where we make use of \eqref{trans.invar.}  in the last line.

\end{proof} It is remarkable that by the finiteness assumption of the measure  $\mu$, the mapping  $m\rightarrow m*\mu$ can be regarded as an averaging of $m$. Therefore by an averaging over unit time scales, implementation of the lemmas above allow us to restrict the multiplier $m(\xi_1,\xi_2,\xi_3)$ to the region 
\begin{align*}
|\lambda_j|\gtrsim 1, \ \ j=1,2,3.
\end{align*}
Furthermore since we deal with a multiplier with no singularities for $|\xi_j|\ll1$, by using the similar reason we may assume that 
\begin{align*}
\max(|\xi_1|,|\xi_2|,|\xi_3|)\gtrsim 1
\end{align*}
Therefore, through a decomposition of the variables we may assume without loss of generality that
\begin{align*}
N_{\max}\gtrsim 1, \ \ L_{\min}\gtrsim 1.
\end{align*}
We now fix some summation conventions in order to be used in the rest. Any summation of the form $L_{\max}\sim...$ is a sum over the three dyadic variables $L_1,L_2,L_3$, for example,
\begin{align*}
\sum_{L_{\max}\sim H}:=\sum_{L_1,L_2,L_3\gtrsim 1: L_{\max}\sim H}
\end{align*}
Furthermore any summation of the form $N_{\max}\sim ...$ is a sum over the three dyadic variables $N_1,N_2,N_3>0$, for instance,
\begin{align*}
\sum_{N_{\max}\sim N_{\text{med}}\sim N}:=\sum_{N_1,N_2,N_3> 0: N_{\max}\sim N_{\text{med}}\sim N}
\end{align*}
Letting $m$ be a $[3;\mathbb{R}^2\times \mathbb{R}]$ multiplier, in the subsequent discussion, we intend to study the problem of controlling 
\begin{align}\label{3Zmult}
\norm{m((\xi_1,\tau_1),(\xi_2,\tau_2),(\xi_3,\tau_3))}_{[3;\mathbb{R}^2\times \mathbb{R}]},
\end{align} 
so by a dyadic decomposition of the support of $m$ in the variables $\xi_j$, $\lambda_j$ together with a dyadic decomposition of the resonance function $h$ we have
\begin{align}\label{normoftriplesum}
\eqref{3Zmult}\lesssim\norm{\sum_{N_{\max}\gtrsim 1}\sum_{H}\sum_{L_1,L_2,L_3}m((N_1,L_1),(N_2,L_2),(N_3,L_3))X_{N_1,N_2,N_3;H;L_1,L_2,L_3}}_{[3;\mathbb{R}^2\times \mathbb{R}]}
\end{align}
where $X_{N_1,N_2,N_3;H;L_1,L_2,L_3}$ is the multiplier 
\begin{align*}
X_{N_1,N_2,N_3;H;L_1,L_2,L_3}(\xi,\tau):=\chi_{|{h(\xi)|\sim H}}\prod_{j=1}^3\chi_{|\xi_j|\sim N_j}\chi_{|\lambda_j|\sim L_j}.
\end{align*}
Note that $N_j$ and $L_j$, in turn, measure the size of the frequency of the $j^{th}$ wave and how closely it approximates a free solution, whereas $H$ measures the amount of resonance.  
From \eqref{xi,tau}, \eqref{lambda,h} and \eqref{resonancefnc} it can be deduced that $X_{N_1,N_2,N_3;H;L_1,L_2,L_3}$ vanishes unless
\begin{align}\label{N_maxsimN_med}
N_{\max}\sim N_{\text{med}}
\end{align}
and 
\begin{align}\label{L_maxsimmax}
L_{\max}\sim \max(L_{\text{med}},H).
\end{align}
Therefore using \eqref{N_maxsimN_med}, \eqref{L_maxsimmax} and implementing Schur's test \cite{Tao} (which enables us to replace sum with a supremum) to the sums in $N_{\max}$ and $N_{\text{med}}$, we obtain
\begin{multline*}
\eqref{normoftriplesum}\lesssim\sup_{N\gtrsim 1}\Bigl\|\sum_{N_{\max}\sim N_{\text{med}}\sim N}\sum_{H}\sum_{L_{max}\sim \max{(L_{\text{med}},H)}}m((N_1,L_1),(N_2,L_2),(N_3,L_3))\\\times X_{N_1,N_2,N_3;H;L_1,L_2,L_3}\Bigl\|_{[3;\mathbb{R}^2\times \mathbb{R}]}.
\end{multline*} Therefore, by the triangle inequality and \eqref{L_maxsimmax} it suffices to control
\begin{multline}\label{doublesum}
\sum_{N_{\max}\sim N_{\text{med}}\sim N}\sum_{L_1,L_2,L_3\gtrsim1}m((N_1,L_1),(N_2,L_2),(N_3,L_3))\norm{X_{N_1,N_2,N_3;L_{\max};L_1,L_2,L_3}}_{[3;\mathbb{R}^2\times\mathbb{R}]}
\end{multline}
or
\begin{multline}\label{triplesum}
\sum_{N_{\max}\sim N_{\text{med}}\sim N}\sum_{L_{\max}\sim L_{\text{med}}}\sum_{H\ll L_{\max}}m((N_1,L_1),(N_2,L_2),(N_3,L_3))\\ \times\norm{X_{N_1,N_2,N_3;H;L_1,L_2,L_3}}_{[3;\mathbb{R}^2\times\mathbb{R}]}
\end{multline}
for all $N\gtrsim 1$.  
The following lemma gives a sharp bound for the quantity
\begin{align}\label{charblock}
\norm{X_{N_1,N_2,N_3;H;L_1,L_2,L_3}}_{[3;\mathbb{R}^2\times\mathbb{R}]}.
\end{align}
\begin{lemma}[\cite{Tao}]\label{cruciallem}
Suppose that $N_1,N_2,N_3>0$, $L_1,L_2,L_3>0$ and $H>0$ satisfy \eqref{N_maxsimN_med} and \eqref{L_maxsimmax}.
\begin{enumerate}
\item In the case $(+++)$, let the dispersion relations be given by \eqref{+++} so we may assume that $H\sim N_{\max}^2$. Then 

\begin{align}\label{lem2,0}
\eqref{charblock}\lesssim L_{\min}^{1/2}N_{\max}^{-1/2}N_{\min}^{1/2}\min(N_{\max}N_{\min},L_{\text{med}})^{1/2}
\end{align}
\item In the case $(++-)$, let the dispersion relations be given by \eqref{++-} and from \eqref{h estimate} $H\lesssim N_1N_2$. Then
\begin{itemize}
\item $((++)\ \text{case})$ If $N_1\sim N_2\gtrsim N_3$ then \eqref{charblock} vanishes unless $H\sim N_1^2$ in this case we have
\begin{align}\label{lem2,1}
\eqref{charblock}\lesssim L_{\min}^{1/2}N_{\max}^{-1/2}N_{\min}^{1/2}\min(N_{\max}N_{\min},L_{\text{med}})^{1/2}
\end{align}
\item $((+-)\ \text{coherence})$ If $N_1\sim N_3\gtrsim N_2$ and $H\sim L_2\gg L_1,L_3,N_2^2$ then
\begin{align}\label{lem2,2}
\eqref{charblock}\lesssim L_{\min}^{1/2}N_{\max}^{-1/2}N_{\min}^{1/2}\min(H,\frac{H}{N_{\min}^2}L_{\text{med}})^{1/2}
\end{align}
The same estimate holds with the roles of $1$ and $2$ reversed.
\item In all other cases, 
\begin{align}\label{lem2,3}
\eqref{charblock}\lesssim L_{\min}^{1/2}N_{\max}^{-1/2}N_{\min}^{1/2}\min(H,L_{\text{med}})^{1/2}\min(1,\frac{H}{N_{\min}^2})^{1/2}.
\end{align}
\end{itemize}

\end{enumerate}
\end{lemma}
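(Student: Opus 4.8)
This is the two-dimensional, Schr\"odinger instance of the general dyadic estimate for $[3;Z]$-multipliers proved in \cite{Tao}, so the plan is to reproduce (or simply invoke) that argument; I sketch its skeleton. By duality, \eqref{charblock} is the supremum of $\int_{\Gamma_3(\mathbb{R}^2\times\mathbb{R})}X_{N_1,N_2,N_3;H;L_1,L_2,L_3}\prod_{j=1}^3 f_j$ over nonnegative $f_j$ with $\norm{f_j}_{L^2}\le1$, and Lemmas \ref{comp.prin.} and \ref{av} are precisely what license the passage to these dyadic blocks and to the region $|\lambda_j|\gtrsim1$. Writing $z_j=(\xi_j,\tau_j)$, pulling $f_{j_0}$ out by Cauchy--Schwarz and then estimating the resulting convolution-type integral in $f_{j_1}$ and $f_{j_2}$ by Cauchy--Schwarz once more, one reduces matters, for any ordered choice $(j_0,j_1,j_2)$ of the indices, to a single frequency-slab count:
\begin{equation*}
\eqref{charblock}\ \lesssim\ \Bigl(\,\sup_{z_{j_0}}\bigl|\{\,z_{j_1}:\ |\xi_j|\sim N_j,\ |\lambda_j|\sim L_j\ (j=1,2,3),\ |h(\xi_1,\xi_2,\xi_3)|\sim H\,\}\bigr|\,\Bigr)^{1/2},
\end{equation*}
where $\xi_{j_2}=-\xi_{j_0}-\xi_{j_1}$ and $\tau_{j_2}=-\tau_{j_0}-\tau_{j_1}$. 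This slab splits into a $\tau$-part and a $\xi$-part, and the $\tau$-part is elementary: once all $\xi_j$ are frozen and $\tau_{j_0}$ fixed, each of $|\lambda_{j_1}|\sim L_{j_1}$ and $|\lambda_{j_2}|\sim L_{j_2}$ pins $\tau_{j_1}$ to an interval, so its measure is $\lesssim\min(L_{j_1},L_{j_2})$; choosing $j_0$ so that this equals $L_{\min}$ produces the ubiquitous factor $L_{\min}^{1/2}$, while the resonance identity $\lambda_1+\lambda_2+\lambda_3=h$ of \eqref{resonancefnc} forces \eqref{L_maxsimmax} and, when the modulations dominate, trades a further power for a factor of size $\min(L_{\text{med}},H)^{1/2}$. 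Such modulation-dependent refinements are what account for the various $\min(\cdot)^{1/2}$ factors in \eqref{lem2,0}--\eqref{lem2,3}.

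The substantive part is the $\xi$-part, the measure of $\{\xi_{j_1}:|\xi_j|\sim N_j\ (j=1,2,3),\ |h|\sim H\}$ with $\xi_{j_0}$ frozen; since $N_{\max}\sim N_{\text{med}}$ by \eqref{N_maxsimN_med} this is an intersection of two thickened circles in $\mathbb{R}^2$ cut further by the resonance cutoff. In the $(+++)$ case one has $h=|\xi_1|^2+|\xi_2|^2+|\xi_3|^2\sim N_{\max}^2$ automatically, so $H\sim N_{\max}^2$ and the cutoff $|h|\sim H$ is vacuous; the transversality (convexity) of two circles of radii $N_2,N_3$ whose centres are $\sim N_1$ apart caps this set, and after combining with the $\tau$-side and tracking the modulation sizes carefully (as in \cite{Tao}) one arrives at \eqref{lem2,0}. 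The identical geometry, with $H\sim N_1^2$ again forced because $\xi_1\approx-\xi_2$, yields \eqref{lem2,1} in the $(++)$ subcase $N_1\sim N_2\gtrsim N_3$ of \eqref{++-}. In the remaining configurations of \eqref{++-} one exploits the exact identity $h=-2\xi_1\cdot\xi_2$, which makes $|h|\sim H$ a genuine restriction: the relevant gradient has size $\sim N_{\max}$, so $\xi_{\min}$ is confined to a slab of thickness $\sim H/N_{\max}$ within $\{|\xi_{\min}|\sim N_{\min}\}$, a set of measure $\lesssim N_{\min}^2\min(1,H/N_{\min}^2)$, and with the $\tau$-part this gives \eqref{lem2,3}.

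The step I expect to be the real obstacle is the $(+-)$ coherence case $N_1\sim N_3\gtrsim N_2$ with $H\sim L_2\gg L_1,L_3,N_2^2$ — precisely the configuration that is empty in one space dimension, so there is no lower-dimensional model to fall back on. Here $\xi_1\approx-\xi_3$, the modulation constraints for the two high-frequency waves become almost redundant, the crude transversality estimate degenerates, and the dominant modulation $L_2$ must absorb the loss. One handles it by a change of variables that peels off the near-coherent high-frequency pair, after which the surviving integration is again (essentially) a thickened-paraboloid intersection, but now governed by the low frequency $N_{\min}=N_2$ rather than $N_{\max}$; balancing $H$, $L_{\min}$ and $L_{\text{med}}$ then produces the anomalous bound $\min(H,\frac{H}{N_{\min}^2}L_{\text{med}})^{1/2}$ of \eqref{lem2,2}, and the statement with the roles of $1$ and $2$ exchanged follows by relabelling. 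Collecting the four displayed estimates completes the proof; they are in fact sharp, as \cite{Tao} shows by explicit extremizers, although only the upper bounds are needed for the trilinear estimate \eqref{keyest}.
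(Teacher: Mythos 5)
The paper offers no proof of this lemma: it is stated with the citation \cite{Tao} and imported wholesale (it is Tao's dyadic box-localization estimate for Schr\"odinger-type $[3;\mathbb{R}^{d}\times\mathbb{R}]$ multipliers), so there is no in-text argument to compare your sketch against. Your outline correctly reproduces the skeleton of that proof — duality and Cauchy--Schwarz reducing to a measure count over a frequency--modulation slab, the $\tau$-integration producing $L_{\min}^{1/2}$, and the $\xi$-geometry (thickened circles / the identity $h=-2\,\xi_1\cdot\xi_2$) producing the remaining factors, with the $(+-)$ coherence branch correctly flagged as the genuinely delicate case — so there is nothing to correct, and relying on \cite{Tao} for the detailed verification is exactly what the paper does too. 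One mild caution: in the ``all other cases'' branch your slab-measure heuristic as written gives $N_{\min}^2\min(1,H/(N_{\min}N_{\max}))$ rather than $N_{\min}^2\min(1,H/N_{\min}^2)$; in Tao's argument the $N_{\max}^{-1/2}$ prefactor and the $\min(1,H/N_{\min}^2)^{1/2}$ factor arise after a further interplay between the $\xi$- and $\tau$-counts (not from the raw $\xi$-measure alone), so the final exponents come out as displayed in \eqref{lem2,3} — this is an imprecision of the sketch, not of the statement.
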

Also the lemma below demonstrates that higher order multilinear estimates might be reduced to the lower ordered ones and by this means the analysis of the whole multiplier splits up.  

\begin{lemma}[\cite{Tao}]\label{split}
If $k_1,k_2\geq 1$ and $m_1$ and $m_2$ are functions on $Z^{k_1}$ and $Z^{k_2}$ respectively, then we have the composition estimate
\begin{multline}\label{comp}
\norm{m_1(\xi_1,...,\xi_{k_1})m_2(\xi_{k_1+1},...,\xi_{k_1+k_2})}_{[k_1+k_2;Z]}\\ \leq \norm{m_1(\xi_1,...,\xi_{k_1})}_{[k_1+1;Z]}\norm{m_2(\xi_1,...,\xi_{k_2})}_{[k_2+1;Z]}.
\end{multline}
In particular, for every $m:Z^k\rightarrow \mathbb{R}$ we have the $TT^*$ identity
\begin{align}\label{TT}
\norm{m(\xi_1,...,\xi_k)\overline{m(- \xi_{k+1},...,-\xi_{2k})}}_{[2k;Z]}=\norm{m(\xi_1,...,\xi_k)}_{[k+1;Z]}^2.
\end{align}

\end{lemma}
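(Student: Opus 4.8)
The plan is to deduce both parts from one device. For a function $n$ on $Z^{\ell}$ and test functions $f_1,\dots,f_{\ell}$ on $Z$, set
$$\Lambda_n(f_1,\dots,f_{\ell})(\eta):=\int_{\xi_1+\dots+\xi_{\ell}=\eta}n(\xi_1,\dots,\xi_{\ell})\prod_{i=1}^{\ell}f_i(\xi_i),\qquad \eta\in Z.$$
Slicing $\Gamma_{\ell+1}(Z)$ by the value $\eta=\xi_1+\dots+\xi_{\ell}=-\xi_{\ell+1}$ shows that for every test function $g$ on $Z$,
$$\int_{\Gamma_{\ell+1}(Z)}n(\xi_1,\dots,\xi_{\ell})\Big(\prod_{i=1}^{\ell}f_i(\xi_i)\Big)g(\xi_{\ell+1})=\int_Z\Lambda_n(f_1,\dots,f_{\ell})(\eta)\,g(-\eta)\,\mathrm{d}\eta.$$
Taking the supremum first over $g$ with $\norm{g}_{L^2(Z)}=1$ and then over the $f_i$ with $\norm{f_i}_{L^2(Z)}=1$, and using that $g\mapsto g(-\cdot)$ preserves $L^2$ norms since $\mathrm{d}\xi$ is invariant, we obtain the characterization
$$\norm{n}_{[\ell+1;Z]}=\sup\Big\{\norm{\Lambda_n(f_1,\dots,f_{\ell})}_{L^2(Z)}:\norm{f_i}_{L^2(Z)}\le 1\Big\},$$
where on the left $n$ is viewed as a function on $\Gamma_{\ell+1}(Z)$ via a dummy last variable. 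In particular $\norm{\Lambda_n(f_1,\dots,f_{\ell})}_{L^2(Z)}\le\norm{n}_{[\ell+1;Z]}\prod_i\norm{f_i}_{L^2(Z)}$.

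For the composition estimate \eqref{comp}, expand the left-hand $[k_1+k_2;Z]$ pairing and introduce $\eta=\xi_1+\dots+\xi_{k_1}=-(\xi_{k_1+1}+\dots+\xi_{k_1+k_2})$, so that
$$\int_{\Gamma_{k_1+k_2}(Z)}m_1m_2\prod_{i=1}^{k_1+k_2}f_i=\int_Z\Lambda_{m_1}(f_1,\dots,f_{k_1})(\eta)\,\Lambda_{m_2}(f_{k_1+1},\dots,f_{k_1+k_2})(-\eta)\,\mathrm{d}\eta.$$
Cauchy--Schwarz in $\eta$ bounds this by $\norm{\Lambda_{m_1}(f_1,\dots,f_{k_1})}_{L^2}\,\norm{\Lambda_{m_2}(f_{k_1+1},\dots,f_{k_1+k_2})}_{L^2}$, and the last inequality of the previous paragraph applied to each factor gives $\norm{m_1}_{[k_1+1;Z]}\norm{m_2}_{[k_2+1;Z]}\prod_i\norm{f_i}_{L^2}$. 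Taking the supremum over unit-norm $f_i$ yields \eqref{comp}.

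For the $TT^*$ identity \eqref{TT}, the inequality ``$\le$'' is the special case of \eqref{comp} with $k_1=k_2=k$, $m_1=m$ and $m_2(\xi_{k+1},\dots,\xi_{2k})=\overline{m(-\xi_{k+1},\dots,-\xi_{2k})}$, together with the elementary invariances $\norm{\overline{m}}_{[k+1;Z]}=\norm{m}_{[k+1;Z]}$ (conjugate all test functions) and $\norm{m(-\cdot)}_{[k+1;Z]}=\norm{m}_{[k+1;Z]}$ (replace each $f_i$ by $f_i(-\cdot)$ and use that $\Gamma_{k+1}(Z)$ and $\mathrm{d}\xi$ are negation-invariant). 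For ``$\ge$'', fix $f_1,\dots,f_k$ with $\norm{f_i}_{L^2}\le 1$ and expand
$$\norm{\Lambda_m(f_1,\dots,f_k)}_{L^2(Z)}^2=\int_Z\Lambda_m(f_1,\dots,f_k)(\eta)\,\overline{\Lambda_m(f_1,\dots,f_k)(\eta)}\,\mathrm{d}\eta.$$
Writing the conjugated factor as an integral over $\xi_{k+1}+\dots+\xi_{2k}=-\eta$ after the substitution $\xi_{k+i}\mapsto-\xi_{k+i}$ and merging with the first factor, the right-hand side becomes the $[2k;Z]$ pairing of the multiplier $m(\xi_1,\dots,\xi_k)\overline{m(-\xi_{k+1},\dots,-\xi_{2k})}$ against the $2k$ functions $f_1,\dots,f_k,\overline{f_1(-\cdot)},\dots,\overline{f_k(-\cdot)}$, each of $L^2$ norm $\le 1$. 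Hence $\norm{\Lambda_m(f_1,\dots,f_k)}_{L^2}^2\le\norm{m(\xi_1,\dots,\xi_k)\overline{m(-\xi_{k+1},\dots,-\xi_{2k})}}_{[2k;Z]}$; taking the supremum over the $f_i$ and invoking the characterization from the first paragraph gives the reverse bound, and hence \eqref{TT}.

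The only genuine difficulty is bookkeeping: one must track the invariant measure on $\Gamma_{\ell}(Z)$ through the slicing change of variables $\eta=\xi_1+\dots+\xi_{\ell}$, and keep the conjugations and sign flips $\xi\mapsto-\xi$ consistent so that every auxiliary test function that appears has the same $L^2$ norm as the function it came from. With those handled, both parts are immediate applications of Cauchy--Schwarz.
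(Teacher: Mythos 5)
Your proof of the composition estimate \eqref{comp} is essentially the paper's argument: your slicing functional $\Lambda_n$ is exactly the paper's operator $L_s$, the characterization $\norm{n}_{[\ell+1;Z]}=\sup\norm{\Lambda_n(f_1,\dots,f_\ell)}_{L^2}$ is the duality step the paper invokes in \eqref{ls}, and both proofs finish by recognizing the $\Gamma_{k_1+k_2}$ pairing as an $L^2$ inner product $\langle L_1(\vec f),L_2(\vec g)\rangle$ and applying Cauchy--Schwarz; you merely make the change of variables $\eta=\xi_1+\dots+\xi_{k_1}$ explicit where the paper leaves it implicit. The genuine difference is that the paper explicitly declines to prove \eqref{TT} ("We prove only the identity \eqref{comp}"), whereas you supply the full argument: the ``$\le$'' direction from \eqref{comp} combined with the conjugation and negation invariances of the $[k+1;Z]$ norm, and the ``$\ge$'' direction by expanding $\norm{\Lambda_m(f_1,\dots,f_k)}_{L^2}^2$ as the $[2k;Z]$ pairing of $m(\xi_1,\dots,\xi_k)\overline{m(-\xi_{k+1},\dots,-\xi_{2k})}$ against the $2k$ test functions $f_1,\dots,f_k,\overline{f_1(-\cdot)},\dots,\overline{f_k(-\cdot)}$ and then taking the supremum. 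That extra computation is correct, and it is the standard $TT^*$ device from Tao's original paper; including it makes your write-up self-contained where the paper's is not.
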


\begin{proof}[Proof]
We prove only the identity \eqref{comp}. So for $s=1,2$, let $L_s$ be the $k_s$-linear operator defined by
 \begin{align*}
 \langle L_s(f_1,...,f_{k_s}),f_{k_s+1} \rangle=\int_{\Gamma_{k_s+1}(Z)}m_s(\xi_1,...,\xi_{k_s})\prod_{j=1}^{k_s+1}f_j(\xi_j).
 \end{align*}
By duality this entails that
\begin{align}\label{ls}
\norm{L_s(f_1,...,f_{k_s})}_{L^2}=\sup_{\norm{f_{k_s+1}}_{L^2}=1}|\langle L_s(f_1,...,f_{k_s}),f_{k_s+1} \rangle|\leq \norm{m_s}_{[k_s+1;Z]}\prod_{j=1}^{k_s}\norm{f_j}_{L^2}.
\end{align}
Thus Cauchy-Schwarz inequality and \eqref{ls} lead to
\begin{multline*}
\int_{\Gamma_{k_1+k_2}(Z)}m_1(\xi_1,...,\xi_{k_1})m_2(\xi_{k_1+1},...,\xi_{k_1+k_2})\prod_{j=1}^{k_1+k_2}f_j(\xi_j)\\=\int_{\Gamma_{k_1+k_2}(Z)}m_1(\xi_1,...,\xi_{k_1})\prod_{j=1}^{k_1}f_j(\xi_j)\Big(m_2(\xi_{k_1+1},...,\xi_{k_1+k_2})\prod_{j=k_1+1}^{k_1+k_2}f_j(\xi_j)\Big)\\=\langle L_1(f_1,...,f_{k_1}),L_2(f_{k_1+1},...,f_{k_1+k_2}) \rangle  \leq \norm{L_1(f_1,...,f_{k_1})}_{L^2}\norm{L_2(f_{k_1+1},...,f_{k_1+k_2})}_{L^2}\\ \leq \norm{m_1}_{[k_1+1;Z]}\norm{m_2}_{[k_2+1;Z]}\prod_{j=1}^{k_1+k_2}\norm{ f_j}_{L^2}.
\end{multline*}
\end{proof}

\section{Proof of Proposition \ref{key}: Trilinear $X^{s,b}$ Estimate}\label{trilinearestimate}
Before getting start with the estimate, we suitably arrange the $X^{s,b}$ norm of the non-linearity so as to sort out the trilinear form.  By duality we may write the norm of the non-linearity as follows
\begin{multline*}
\norm{c_1|u|^2u+c_2K(|u|^2)u}_{X^{s+a,b-1}}=\\ \sup_{\norm{d}_{X^{-(s+a),1-b}}=1}\Big|\int_{\mathbb{R}^2\times \mathbb{R}}\big(c_1|u|^2u+c_2K(|u|^2)u\big)(x,t)d(x,t)\,\text{d}x\,\text{d}t\Big|
\end{multline*}
Writing Fourier transform of nonlinear pieces as a convolution, we have
 \begin{align*}
 \widehat{u\overline{v}w}(\xi,\tau)&=\widehat{u}*\widehat{\overline{v}}*\widehat{w}(\xi,\tau)=\underset{\substack{\xi_1 + \xi_2 + \xi_3-\xi=0 \\ \tau_1+\tau_2+\tau_3-\tau=0}}{\int} \widehat{u}(\xi_1,\tau_1)\widehat{\overline{v}}(\xi_2,\tau_2)\widehat{w}(\xi_3,\tau_3)\,\text{d}\xi_1\, \text{d}\xi_2\, \text{d}\tau_1\,d\tau_2 \\&= \underset{\substack{\xi_1 + \xi_2 + \xi_3-\xi=0 \\ \tau_1+\tau_2+\tau_3-\tau=0}}{\int} \widehat{u}(\xi_1,\tau_1)\overline{\widehat{v}(-\xi_2,-\tau_2)}\widehat{w}(\xi_3,\tau_3)\,\text{d}\xi_1\, \text{d}\xi_2\, \text{d}\tau_1\,d\tau_2, 
 \end{align*}
and the second term in the non-linearity is dealt as follows
  \begin{align*}
 \widehat{K(u\overline{v})w}(\xi,\tau)&=\alpha(\xi)\widehat{u\overline{v}}*\widehat{w}(\xi,\tau)=\underset{\substack{\xi_1 + \xi_2 - \xi=0 \\ \tau_1+\tau_2-\tau=0}}{\int} \alpha(\xi_1)\widehat{u\overline{v}}(\xi_1,\tau_1)\widehat{w}(\xi_2,\tau_2)\,\text{d} \xi_1\, \text{d} \tau_1\\&=\underset{\substack{\xi_1 + \xi_2 - \xi=0 \\ \tau_1+\tau_2-\tau=0}}{\int} \alpha(\xi_1)\Big[\underset{\substack{\xi_1 - \xi_3 - \xi_4=0 \\ \tau_1-\tau_3-\tau_4=0}}{\int}\widehat{u}(\xi_3,\tau_3)\widehat{\overline{v}}(\xi_4,\tau_4)\,\text{d}\xi_3\, \text{d} \tau_3 \Big]\widehat{w}(\xi_2,\tau_2)\,\text{d}\xi_1\, \text{d} \tau_1\\&=\underset{\substack{\xi_1+ \xi_2 - \xi=0 \\ \tau_1+\tau_2-\tau=0}}{\int} \alpha(\xi_1)\Big[\underset{\substack{\xi_1 - \xi_3 - \xi_4=0 \\ \tau_1-\tau_3-\tau_4=0}}{\int}\widehat{u}(\xi_3,\tau_3)\overline{\widehat{v}(-\xi_4,-\tau_4)}\,\text{d}\xi_3\, \text{d}\tau_3 \Big]\widehat{w}(\xi_2,\tau_2)\,\text{d}\xi_1\, \text{d} \tau_1 \\&=\underset{\substack{\xi_2 +\xi_3 + \xi_4-\xi=0 \\ \tau_2+\tau_3 +\tau_4-\tau=0}}{\int} \alpha(\xi_3+\xi_4)\widehat{w}(\xi_2,\tau_2)\widehat{u}(\xi_3,\tau_3)\overline{\widehat{v}(-\xi_4,-\tau_4)}\,\text{d}\xi_3\,\text{d}\tau_3\,\text{d}\xi_4\, \text{d}\tau_4.  
 \end{align*}
In the last line we have taken account of the relations regarding the integration region so as to replace $\text{d}\xi_1\,\text{d}\tau_1$ by $\text{d}\xi_4\,\text{d}\tau_4$. Rename the variables by writing $\xi_1,\xi_2,\xi_3$\,; $\tau_1,\tau_2,\tau_3$ in substitution for $\xi_3,\xi_4,\xi_2$\,; $\tau_3,\tau_4,\tau_2$ respectively to get that
\begin{align*}
\widehat{K(u\overline{v})w}(\xi,\tau)=\underset{\substack{\xi_1 +\xi_2 + \xi_3-\xi=0 \\ \tau_1+\tau_2 +\tau_3-\tau=0}}{\int} \alpha(\xi_1+\xi_2)\widehat{u}(\xi_1,\tau_1)\overline{\widehat{v}(-\xi_2,-\tau_2)}\widehat{w}(\xi_3,\tau_3)\,\text{d}\xi_1\,\text{d}\tau_1\,\text{d}\xi_2\, \text{d}\tau_2. 
\end{align*}
Therefore, as Fourier transform preserves the inner product we obtain
\begin{multline*}
\int_{\mathbb{R}^2\times \mathbb{R}}\big(c_1u\overline{v}w+c_2K(u\overline{v})w\big)(x,t)d(x,t)\,\text{d}x\,\text{d}t\\=\int_{\mathbb{R}^2\times \mathbb{R}} (c_1\mathcal{F}(u\overline{v}w)+c_2\mathcal{F}( K(u\overline{v})w))(\xi,\tau)\mathcal{F}^{-1}d(\xi,\tau)\,\text{d}\xi\, \text{d}\tau\\=\underset{\substack{\xi_1 +\xi_2 + \xi_3-\xi=0 \\ \tau_1+\tau_2 +\tau_3-\tau=0}}{\int}[c_1+ c_2\alpha(\xi_1+\xi_2)]\widehat{u}(\xi_1,\tau_1)\overline{\widehat{v}(-\xi_2,-\tau_2)}\widehat{w}(\xi_3,\tau_3)\widehat{d}(-\xi,-\tau)\,\text{d}\xi_1\,\text{d}\tau_1\,\text{d}\xi_2\, \text{d}\tau_2\,\text{d}\xi\, \text{d}\tau\\= \underset{\substack{\xi_1 +\xi_2 + \xi_3+\xi_4=0 \\ \tau_1+\tau_2 +\tau_3+\tau_4=0}}{\int}[c_1+ c_2\alpha(\xi_1+\xi_2)]\widehat{u}(\xi_1,\tau_1)\overline{\widehat{v}(-\xi_2,-\tau_2)}\widehat{w}(\xi_3,\tau_3)\widehat{d}(\xi_4,\tau_4)\,\text{d}\xi_1\,\text{d}\xi_2\,\text{d}\xi_3\,\text{d}\tau_1\,\text{d}\tau_2\, \text{d}\tau_3
\end{multline*}
where, in the last equality, first we replace $\text{d}\xi\, \text{d}\tau$ with  $\text{d}\xi_3\,\text{d}\tau_3$ by using the relations of integral region and then  swap the variables $\xi$, $\tau$ with $-\xi_4$, $-\tau_4$ respectively. By using the fact that the spaces  $X^{s,b-1}_{\tau=h(\xi)}$ and $X^{-s,1-b}_{\tau=-h(-\xi)}$ are dual to one another, we define
\begin{align*}
&f_1(\xi_1,\tau_1)=\langle\xi_1\rangle^s\langle\tau_1+|\xi_1|^2\rangle^b|\widehat{u}(\xi_1,\tau_1)|\\& f_2(\xi_2,\tau_2)=\langle\xi_2\rangle^s\langle\tau_2-|\xi_2|^2\rangle^b|\widehat{v}(-\xi_2,-\tau_2)|
\\&f_3(\xi_3,\tau_3)=\langle\xi_3\rangle^s\langle\tau_3+|\xi_3|^2\rangle^b|\widehat{w}(\xi_3,\tau_3)|
\\&f_4(\xi_4,\tau_4)=\langle\xi_4\rangle^{s+a}\langle\tau_4-|\xi_4|^2\rangle^{1-b}|\widehat{d}(\xi_4,\tau_4)|\end{align*}
and 
\begin{multline*} m((\xi_1,\tau_1),(\xi_2,\tau_2),(\xi_3,\tau_3),(\xi_4,\tau_4))\\=\frac{[c_1+c_2\alpha(\xi_1+\xi_2)]\langle\xi_4\rangle^{s+a}}{\prod_{j=1}^3\langle\xi_j\rangle^s \langle\tau_1+|\xi_1|^2\rangle^b\langle\tau_2-|\xi_2|^2\rangle^b\langle\tau_3+|\xi_3|^2\rangle^b\langle\tau_4-|\xi_4|^2\rangle^{1-b}},
\end{multline*} thus the required estimate takes the form
\begin{align*}
\Big|\int_{\Gamma_4(\mathbb{R}^2\times \mathbb{R})}  m((\xi_1,\tau_1),(\xi_2,\tau_2),(\xi_3,\tau_3),(\xi_4,\tau_4))\prod_{j=1}^{4}f_j(\xi_j,\tau_j) \Big|\leq \norm{m}_{[4;\mathbb{R}^2\times \mathbb{R}]}\prod_{j=1}^{4}\norm{f_j}_{L^2(\mathbb{R}^2\times\mathbb{R})}
\end{align*}
where $$\Gamma_4(\mathbb{R}^2\times \mathbb{R})=\{ ((\xi_1,\tau_1),(\xi_2,\tau_2),(\xi_3,\tau_3),(\xi_4,\tau_4))\in (\mathbb{R}^2\times\mathbb{R})^4:\sum_{j=1}^4(\xi_j,\tau_j)=0 \}.$$
Hence it suffices to show that  \begin{align*}
\norm{m((\xi_1,\tau_1),(\xi_2,\tau_2),(\xi_3,\tau_3),(\xi_4,\tau_4))}_{[4;\mathbb{R}^2\times \mathbb{R}]}\lesssim1.
\end{align*} 
\begin{proof}[Proof of Proposition \ref{key}]
We may suppose without loss of generality that $$|\xi_4|\sim \max_{1\leq j \leq 4}|\xi_j|$$
because other cases are easier and follow immediately. In this case the structure of the hyperplane $\Gamma_4(\mathbb{R}^2\times \mathbb{R})$ suggests three cases to consider:
 
 \begin{itemize}
\item \textbf{Case 1} $|\xi_4|\sim |\xi_1|$
 \item \textbf{Case 2} $|\xi_4|\sim |\xi_2|$
\item \textbf{Case 3} $|\xi_4|\sim |\xi_3|$
\end{itemize}
We begin with examining the Case $2$. In this case, the multiplier is estimated by
\begin{multline*}
m((\xi_1,\tau_1),(\xi_2,\tau_2),(\xi_3,\tau_3),(\xi_4,\tau_4))\lesssim \frac{\langle\xi_4 \rangle^a \langle \xi_3 \rangle^{-s}}{\langle\tau_3+|\xi_3|^2\rangle^b \langle \tau_4-|\xi_4|^2\rangle^{1-b}} \times\frac{\langle \xi_1 \rangle^{-s}}{\langle\tau_1+|\xi_1|^2\rangle^b \langle \tau_2-|\xi_2|^2\rangle^{b}}\\=:m_{3,4}((\xi_3,\tau_3),(\xi_4,\tau_4))\times m_{1,2}((\xi_1,\tau_1),(\xi_2,\tau_2)).
\end{multline*} 
Using Lemma \ref{comp.prin.} and Lemma \ref{split} we have the bound
\begin{multline*}
||m((\xi_1,\tau_1),(\xi_2,\tau_2),(\xi_3,\tau_3),(\xi_4,\tau_4))||_{[4;\mathbb{R}^2\times \mathbb{R}]}\lesssim||m_{1,2}((\xi_1,\tau_1),(\xi_2,\tau_2))||_{[3;\mathbb{R}^2\times \mathbb{R}]}\\ \times||m_{3,4}((\xi_3,\tau_3),(\xi_4,\tau_4))||_{[3;\mathbb{R}^2\times \mathbb{R}]}.
\end{multline*}
As a result the proof reduces to showing the following bilinear inequalities
\begin{align}
&\norm{m_{1,2}((\xi_1,\tau_1),(\xi_2,\tau_2))}_{[3;\mathbb{R}^2\times \mathbb{R}]}\lesssim 1, \label{1,2}\\& \norm{m_{3,4}((\xi_3,\tau_3),(\xi_4,\tau_4))}_{[3;\mathbb{R}^2\times \mathbb{R}]}\lesssim 1.\label{3,4}
\end{align}
At this juncture we introduce the dummy variables $\xi_{\text{d}2}$, $\xi_{\text{d}3}$,  $\tau_{\text{d}2}$, $\tau_{\text{d}3}$ satisfying
\begin{align} 
&(\xi_1,\tau_1)+(\xi_2,\tau_2)+(\xi_{\text{d}3},\tau_{\text{d}3})=0,\label{dummy36}\\&(\xi_{\text{d}2},\tau_{\text{d}2})+(\xi_3,\tau_3)+(\xi_4,\tau_4)=0 \label{dummy37}
\end{align}
so that \eqref{dummy36} and \eqref{dummy37} correspond to the hyperplanes with regards to \eqref{1,2} and \eqref{3,4} respectively. As discussed in the previous section, after decomposing the variables $\xi_{j_i}$,  $\lambda_{j_i}=\tau_{j_i}+h_{j_i}(\xi_{j_i})$ and the resonance function $h(\xi_{j_1},\xi_{j_2},\xi_{j_3})=h_{j_1}(\xi_{j_1})+h_{j_2}(\xi_{j_2})+h_{j_3}(\xi_{j_3})$ into dyadics $N_{j_i}$, $L_{j_i}$ and $H$ respectively, it suffices to establish the following inequalities
\begin{align*}
\sum_{N_{max}\sim N_{med}\sim N}\sum_{L_{max}\sim L_{med}}\sum_{H\ll L_{max}}m_{i_1,i_2}((\xi_{i_1},\tau_{i_1}),(\xi_{i_2},\tau_{i_2}))\norm{X_{{N_{\text{d}j}},N_{i_1},N_{i_2};H;L_{\text{d}j},L_{i_1},L_{i_2}}}_{[3;\mathbb{R}^2\times \mathbb{R}]}\lesssim 1
\end{align*}
and 
\begin{align*}
\sum_{N_{max}\sim N_{med}\sim N}\sum_{L_{\text{d}j},L_{i_1},L_{i_2}\gtrsim1}m_{i_1,i_2}((\xi_{i_1},\tau_{i_1}),(\xi_{i_2},\tau_{i_2}))\norm{X_{{N_{\text{d}j}},N_{i_1},N_{i_2};L_{max};L_{\text{d}j},L_{i_1},L_{i_2}}}_{[3;\mathbb{R}^2\times \mathbb{R}]}\lesssim 1
\end{align*}
for all $N\gtrsim 1$ and where, in turn, $(i_1,i_2,j)=(1,2,3), (3,4,2)$ associate to the bilinear inequalities \eqref{1,2} and \eqref{3,4}. Therefore starting with demonstrating \eqref{3,4}, we have to handle four cases in terms of magnitudes of the dyadic variables corresponding to relevant frequency variables:
\begin{enumerate}
  \item $N_{\text{d}2}\sim N_3\sim N_4$
  \item $ N_3\sim N_4\gg N_{\text{d}2}$
  \item $ N_{\text{d}2}\sim N_3 \gg N_4$
  \item $N_{\text{d}2}\sim N_4 \gg N_3$.
  
\end{enumerate} 
In the first of these cases $N_{\min}\sim N_{\max}\sim N \gtrsim1$, so by the estimate \eqref{lem2,3} of Lemma \ref{cruciallem} we have \begin{align*}\norm{X_{{N_{\text{d}2}},N_3,N_4;H;L_{\text{d}2},L_3,L_4}}_{[3;\mathbb{R}^2\times \mathbb{R}]}\lesssim  L_{\min}^{1/2}N^{-1/2}N^{1/2}&\min(H,L_{\text{med}})^{1/2}\min(1,\frac{H}{N^2})^{1/2}&\\ \lesssim L_{\min}^{1/2}H^{2\epsilon}L_{\text{med}}^{1/2-2\epsilon}.
\end{align*} 
Thus
\begin{align*}
\frac{\langle N_4 \rangle^a \langle N_3 \rangle^{-s}}{L_3^b L_4^{1-b} }\norm{X_{{N_{\text{d}2}},N_3,N_4;H;L_{\text{d}2},L_3,L_4}}_{[3;\mathbb{R}^2\times \mathbb{R}]}\lesssim\frac{N^{-s+a}L_{\min}^{1/2}L_{\text{med}}^{1/2 -2\epsilon}H^{2\epsilon}}{L_{\min}^{{1/2}+\epsilon} L_{\text{med}}^{1/2-\epsilon}}=\frac{N^{-s+a}H^{2\epsilon}}{L_{\min}^{\epsilon}L_{{\text{med}}}^{\epsilon}}.
\end{align*}
It follows that for if $H\sim L_{\max}$, then since $H\lesssim N_{\max}^2$, we have
\begin{multline*}
\sum_{N_{max}\sim N_{med}\sim N}\sum_{L_{\text{d}2},L_{3},L_{4}\gtrsim1}\frac{N^{-s+a}H^{2\epsilon}}{L_{\min}^{\epsilon}L_{{\text{med}}}^{\epsilon}} \ \ \sim  \sum_{N_{max}\sim N_{med}\sim N}\sum_{L_{\text{d}2},L_{3},L_{4}\gtrsim1}\frac{N^{-s+a}H^{3\epsilon}}{L_{\min}^{\epsilon}L_{{\text{med}}}^{\epsilon}L_{\max}^{\epsilon}} \\ \lesssim \sum_{N_{max}\sim N_{med}\sim N}\sum_{L_{\text{d}2},L_{3},L_{4}\gtrsim1}\frac{N^{-s+a+6\epsilon}}{L_{\min}^{\epsilon}L_{{\text{med}}}^{\epsilon}L_{\max}^{\epsilon}} \lesssim 1
\end{multline*}
provided $a<s$. But if $H\ll L_{\max}$ then $L_{\text{med}}\sim L_{\max}$, so summing in $H$ first we get 
 
\begin{multline*}
\sum_{N_{max}\sim N_{med}\sim N}\sum_{L_{max}\sim L_{med}}\sum_{H\ll L_{max}} \frac{N^{-s+a}H^{2\epsilon}}{L_{\min}^{\epsilon}L_{{\text{med}}}^{\epsilon}} \lesssim \sum_{N_{max}\sim N_{med}\sim N}\sum_{L_{max}\sim L_{med}} \frac{N^{-s+a+4\epsilon}}{L_{\min}^{\epsilon}L_{{\text{med}}}^{\epsilon/2}L_{\max}^{\epsilon/2}} \lesssim 1
\end{multline*}
provided again that $a<s$. 
For the second case, we write $N_3\sim N_4 \sim N_{\max}\sim N_{\text{med}}$ and $N_{\text{d}2}\sim N_{\min}$. We consider estimates \eqref{lem2,2} and \eqref{lem2,3} in Lemma \ref{cruciallem} in this case. Since $N_3\sim N_4 \sim N \gtrsim 1 $, we may establish the estimates corresponding to the estimate \eqref{lem2,3} by following just the same lines  of the  previous case. So it suffices to make use of the estimate \eqref{lem2,2} merely, where we have $H\sim L_{\text{d}2}\gg L_3,L_4,N_{\text{d}2}^2\sim N_{\min}^2.$
\begin{align*}\norm{X_{{N_{\text{d}2}},N_3,N_4;L_{\max};L_{\text{d}2},L_3,L_4}}_{[3;\mathbb{R}^2\times \mathbb{R}]}\lesssim  L_{\min}^{1/2}N^{-1/2}N_{\min}^{1/2}\min (H,\frac{H}{N_{\min}^2}L_{\text{med}})^{1/2} 
\end{align*} 
by which it follows that
\begin{multline*}
\sum_{N_{max}\sim N_{med}\sim N}\sum_{L_{\text{d}2},L_{3},L_{4}\gtrsim1}\frac{\langle N_4 \rangle^a \langle N_3 \rangle^{-s}}{L_3^b L_4^{1-b} }\norm{X_{{N_{\text{d}2}},N_3,N_4;L_{\max};L_{\text{d}2},L_3,L_4}}_{[3;\mathbb{R}^2\times \mathbb{R}]}\\ \lesssim \sum_{N_{max}\sim N_{med}\sim N}\sum_{L_{\text{d}2},L_{3},L_{4}\gtrsim1}\frac{N^{-s+a}L_{\min}^{1/2}N^{-1/2}N_{\min}^{1/2}\min (H,\frac{H}{N_{\min}^2}L_{\text{med}})^{1/2} }{L_{\min}^{1/2+\epsilon} L_{\text{med}}^{1/2-\epsilon} } \\ \lesssim\sum_{N_{max}\sim N_{med}\sim N}\sum_{L_{\text{d}2},L_{3},L_{4}\gtrsim1}\frac{N^{-s+a-1/2}N_{\min}^{1/2}(H^{1/2+\epsilon} (\frac{H L_{\text{med}}}{N_{\min}^2})^{1/2-\epsilon})^{1/2}}{L_{\min}^{\epsilon} L_{\text{med}}^{1/2-\epsilon} }\\=\sum_{N_{max}\sim N_{med}\sim N}\sum_{L_{\text{d}2},L_{3},L_{4}\gtrsim1}\frac{N^{-s+a-1/2}N_{\min}^{\epsilon}H^{1/2}}{L_{\min}^{\epsilon}L_{\text{med}}^{1/4-\epsilon/2} }.
\end{multline*}
Since $L_{\max}\sim H\lesssim N^2$ and $N\gtrsim1$, using $$1=\frac{L_{\max}^{\epsilon/2}\langle N_{\min}\rangle^{2\epsilon}}{L_{\max}^{\epsilon/2}\langle N_{\min}\rangle^{2\epsilon}}\lesssim\frac{N^{3\epsilon}}{L_{\max}^{\epsilon/2}\langle N_{\min}\rangle^{2\epsilon}}$$
in majorizing the sum above we obtain the bound
\begin{align*}
\sum_{N_{max}\sim N_{med}\sim N}\sum_{L_{\text{d}2},L_{3},L_{4}\gtrsim1}\frac{N^{-s+a+1/2+3\epsilon}N_{\min}^{\epsilon}}{L_{\min}^{\epsilon}L_{\text{med}}^{1/4-\epsilon/2} L_{\max}^{\epsilon/2}\langle N_{\min}\rangle^{2\epsilon}}\lesssim1
\end{align*}
which is possible as long as $a<s-1/2$. Note at this point that for a non-trivial  smoothing argument it is necessary to make the assumption that $s>1/2$. 
In the third and fourth cases, due to its effect in the use of Lemma \ref{cruciallem}, we have to decide the sign in the quantity $\lambda_{\text{d}2}=\tau_{\text{d}2}\pm|\xi_{\text{d}2}|^2$. No selection would lose the generality though, we prefer to set $\lambda_{\text{d}2}=\tau_{\text{d}2}+|\xi_{\text{d}2}|^2$. Hence for the third case, by this choice of the dummy modulation variable, we fall under the $(++)$ case which leads to $H\sim N_{\max}^2$. Thus the estimate \eqref{lem2,1} in Lemma \ref{cruciallem} is to be used. Once $H\sim L_{\max}$,
\begin{align*}
 \norm{X_{{N_{\text{d}2}},N_3,N_4;L_{\max};L_{\text{d}2},L_3,L_4}}_{[3;\mathbb{R}^2\times \mathbb{R}]}&\lesssim L_{\min}^{1/2}N^{-1/2}N_{\min}^{1/2}\min(NN_{\min},L_\text{med})^{1/2}\\&\lesssim L_{\min}^{1/2}(N^{3\epsilon}N_{\min}^{3\epsilon}L_{\text{med}}^{1-3\epsilon})^{1/2}=L_{\min}^{1/2}L_{\text{med}}^{1/2-3\epsilon/2}N^{3\epsilon/2}N_{\min}^{3\epsilon/2}
 \end{align*}
implies that
\begin{multline*}
\sum_{N_{max}\sim N_{med}\sim N}\sum_{L_{\text{d}2},L_{3},L_{4}\gtrsim1}\frac{\langle N_4 \rangle^a \langle N_3 \rangle^{-s}}{L_3^b L_4^{1-b} }\norm{X_{{N_{\text{d}2}},N_3,N_4;L_{\max};L_{\text{d}2},L_3,L_4}}_{[3;\mathbb{R}^2\times \mathbb{R}]}\\ \lesssim \sum_{N_{max}\sim N_{med}\sim N}\sum_{L_{\text{d}2},L_{3},L_{4}\gtrsim1}\frac{{N}^{-s+a+3\epsilon/2}N_{\min}^{3\epsilon/2}}{L_{min}^{\epsilon}L_{\text{med}}^{\epsilon/2}}.
\end{multline*}
Since $L_{\max}\sim H\sim N^2$, using the inequality $1=\frac{L_{\max}^{\epsilon/2}\langle N_{\min}\rangle^{2\epsilon}}{L_{\max}^{\epsilon/2}\langle N_{\min}\rangle^{2\epsilon}}\lesssim \frac{N^{3\epsilon}}{L_{\max}^{\epsilon/2}\langle N_{\min}\rangle^{2\epsilon}}$ in the above estimate we attain
\begin{align*}
\sum_{N_{max}\sim N_{med}\sim N}\sum_{L_{\text{d}2},L_{3},L_{4}\gtrsim1}\frac{{N}^{-s+a+9\epsilon/2}N_{\min}^{3\epsilon/2}}{L_{min}^{\epsilon}L_{\text{med}}^{\epsilon/2}L_{\max}^{\epsilon/2}\langle N_{\min}\rangle^{2\epsilon}}\lesssim 1
\end{align*}
as long as $a<s$. Otherwise if $H\ll L_{\max}$, then as $H\sim N^2$ and $L_{\text{med}}\sim L_{\max}$
\begin{multline*}
\sum_{N_{max}\sim N_{med}\sim N}\sum_{L_{max}\sim L_{med}}\sum_{H\ll L_{max}} \frac{N^{-s+a+3\epsilon/2}N_{\min}^{3\epsilon/2}}{L_{\min}^{\epsilon}L_{{\text{med}}}^{\epsilon/2}}\\ \sim \sum_{N_{max}\sim N_{med}\sim N}\sum_{L_{max}\sim L_{med}}\sum_{H\ll L_{max}} \frac{N^{-s+a+\epsilon}H^{\epsilon/4}N_{\min}^{3\epsilon/2}}{L_{\min}^{\epsilon}L_{{\text{med}}}^{\epsilon/8}L_{\max}^{3\epsilon/8}}.
\end{multline*} 
Summing this in $H$ we get the bound
\begin{align*}
\sum_{N_{max}\sim N_{med}\sim N}\sum_{L_{max}\sim L_{med}} \frac{N^{-s+a+\epsilon}N_{\min}^{3\epsilon/2}}{L_{\min}^{\epsilon}L_{{\text{med}}}^{\epsilon/8}L_{\max}^{\epsilon/8}}\lesssim 1
\end{align*}
so long as $a<s$. In the last case, we have $N_{\text{d}2}\sim N_4\sim N_{\max} \sim  N$ and $N_3\sim N_{\min}$. Sign analysis of the modulations $\lambda_{\text{d}2}$ (which is set in the previous case) and $\lambda_4$ suggests utilizing \eqref{lem2,2} and \eqref{lem2,3} in Lemma \ref{cruciallem}. In the separate case $H\sim L_{\max} \sim L_3 \gg L_{\text{d}2}, L_4, N_3^2$,  the bound \eqref{lem2,2}  gives rise to the estimate
\begin{multline*}
\sum_{N_{max}\sim N_{med}\sim N}\sum_{L_{\text{d}2},L_{3},L_{4}\gtrsim1}\frac{\langle N_4 \rangle^a \langle N_3 \rangle^{-s}}{L_3^b L_4^{1-b} }\norm{X_{{N_{\text{d}2}},N_3,N_4;L_{\max};L_{\text{d}2},L_3,L_4}}_{[3;\mathbb{R}^2\times \mathbb{R}]}\\ \lesssim \sum_{N_{max}\sim N_{med}\sim N}\sum_{L_{\text{d}2},L_{3},L_{4}\gtrsim1}\frac{N^{a-1/2}N_{\min}^{1/2}L_{\min}^{1/2}\min({H,\frac{H}{N_{\min}^2}L_{\text{med}}})^{1/2}}{\langle N_{\min} \rangle^s L_{\max}^{1/2+\epsilon}L_{\min}^{1/2-\epsilon}} \\ \lesssim \sum_{N_{max}\sim N_{med}\sim N}\sum_{L_{\text{d}2},L_{3},L_{4}\gtrsim1}\frac{N^{a-1/2}N_{\min}^{1/2} H^{1/2}L_{\min}^{\epsilon}}{\langle N_{\min} \rangle^s L_{\max}^{1/2+\epsilon}}.
\end{multline*}
Since $L_{\max}\sim H \lesssim N^2$ writing $H\lesssim L_{\max}^{1-\epsilon}N^{2\epsilon}$ we bound the above sum by
\begin{multline*}
\sum_{N_{max}\sim N_{med}\sim N}\sum_{L_{\text{d}2},L_{3},L_{4}\gtrsim1}\frac{N^{a-1/2+\epsilon}N_{\min}^{1/2} L_{\min}^{\epsilon}}{\langle N_{\min} \rangle^s L_{\max}^{3\epsilon/2}}\\\lesssim \sum_{N_{max}\sim N_{med}\sim N}\sum_{L_{\text{d}2},L_{3},L_{4}\gtrsim1}\frac{N^{a-1/2+\epsilon}N_{\min}^{\epsilon} }{\langle N_{\min} \rangle^{s-1/2+\epsilon} L_{\max}^{\epsilon/6}L_{\text{med}}^{\epsilon/6}L_{\min}^{\epsilon/6}}
\end{multline*}
which is summable provided that $a<1/2$. For the situations with  $L_{\max}\gg H$ where the estimate \eqref{lem2,3} is available, we have the bound 
\begin{multline*}
\sum_{N_{max}\sim N_{med}\sim N}\sum_{L_{max}\sim L_{med}}\sum_{H\ll L_{max}}\frac{\langle N_4 \rangle^a \langle N_3 \rangle^{-s}}{L_3^b L_4^{1-b} }\norm{X_{{N_{\text{d}2}},N_3,N_4;H;L_{\text{d}2},L_3,L_4}}_{[3;\mathbb{R}^2\times \mathbb{R}]}\\ \lesssim 
\sum_{N_{max}\sim N_{med}\sim N}\sum_{L_{max}\sim L_{med}}\sum_{H\ll L_{max}} \frac{N^{a-1/2}N_{\min}^{1/2}L_{\min}^{1/2}\min({H,L_{\text{med}}})^{1/2}\min({1,\frac{H}{N_{\min}^2})^{1/2}}}{\langle N_{\min} \rangle^s L_{\min}^{1/2+\epsilon}L_{\text{med}}^{1/2-\epsilon}}\\
\lesssim \sum_{N_{max}\sim N_{med}\sim N}\sum_{L_{max}\sim L_{med}}\sum_{H\ll L_{max}} \frac{N^{a-1/2}N_{\min}^{\epsilon}H^{2\epsilon}L_{\text{med}}^{1/2-2\epsilon}}{\langle N_{\min} \rangle^{s-1/2+\epsilon} L_{\min}^{\epsilon}L_{\text{med}}^{1/2-\epsilon}}
\end{multline*}
as $H\lesssim N^2$ and $L_{\max}\sim L_{\text{med}}$, after summing in $H$, this is bounded by
\begin{align*}
\sum_{N_{max}\sim N_{med}\sim N}\sum_{L_{max}\sim L_{med}} \frac{N^{a-1/2+4\epsilon}N_{\min}^{\epsilon}}{\langle N_{\min} \rangle^{s-1/2+\epsilon} L_{\min}^{\epsilon}L_{\text{med}}^{\epsilon/2}L_{\max}^{\epsilon/2}}\lesssim 1
\end{align*}
provided that $a<1/2$. 
Also for the instances with $L_{\max}\sim H$ where the estimate \eqref{lem2,3} is available, we proceed similarly as above to obtain the bound
\begin{align*}
 \sum_{N_{max}\sim N_{med}\sim N}\sum_{L_{\text{d}2},L_{3},L_{4}\gtrsim1}\frac{N^{a-1/2+5\epsilon}N_{\min}^{\epsilon}}{\langle N_{\min}\rangle^{s-1/2+\epsilon}  L_{\min}^{\epsilon}L_{\text{med}}^{\epsilon/2}L_{\max}^{\epsilon}} \lesssim 1
\end{align*}
provided that $a>1/2$. This completes the proof of \eqref{3,4}. The proof of \eqref{1,2} is the repetition of done for \eqref{3,4} without the multiplier $\langle\xi_4 \rangle^a$ and follows by using \eqref{dummy36} and making the assumption $s>1/2$. Next for the Case $1$, we have

\begin{multline*}
m((\xi_1,\tau_1),(\xi_2,\tau_2),(\xi_3,\tau_3),(\xi_4,\tau_4))\lesssim \frac{\langle\xi_4 \rangle^a \langle \xi_2\rangle^{-s}}{\langle\tau_2-|\xi_2|^2\rangle^b \langle \tau_4-|\xi_4|^2\rangle^{1-b}} \times\frac{\langle \xi_3 \rangle^{-s}}{\langle\tau_1+|\xi_1|^2\rangle^b \langle \tau_3+|\xi_3|^2\rangle^{b}}\\=:m_{2,4}((\xi_2,\tau_2),(\xi_4,\tau_4))\times m_{1,3}((\xi_1,\tau_1),(\xi_3,\tau_3)).
\end{multline*} 
We pick dummy variables $\xi_{\text{d}j}$, $\tau_{\text{d}j}$ for $j=2,3$ such that
\begin{align*}
&(\xi_2,\tau_2)+(\xi_{\text{d}3},\tau_{\text{d}3})+(\xi_4,\tau_4)=0 \\& (\xi_1,\tau_1)+(\xi_{\text{d}2},\tau_{\text{d}2})+(\xi_3,\tau_3)=0
\end{align*}
with which we write $\lambda_{\text{d}2}=\tau_{\text{d}2}+|\xi_{\text{d}2}|^2$ and $\lambda_{\text{d}3}=\tau_{\text{d}3}-|\xi_{\text{d}3}|^2$. So then analysis of these two multipliers fall into $(+++)$ case which is substantially easier and only the estimate \eqref{lem2,0} for \eqref{charblock}  is taken for. In the spirit of the analysis of the subcase $(3)$ above, $[3;\mathbb{R}^3]$ norms of the two multipliers $m_{1,3}$ and $m_{2,4}$ can be shown to be finite provided that $a<1/2$ and $s>1/2.$ Lastly, Case $3$ immediately follows from Case $1$ because the variables $\xi_1$ and $\xi_3$ appear to be symmetric.
\end{proof} 
In general optimality in the multilinear estimates  can be exhibited by Knapp-type examples as such in \cite{Kenig}, \cite{Compaan}. In what follows we construct a suitable such example for which the trilinear estimate \eqref{keyest} fails for $a>1/2$, as a result a half derivative gain is the best possible result for \eqref{duhamel}. To see this, set $$\widehat{u}=\chi_{Q_1},\widehat{v}=\chi_{Q_2},\widehat{w}=\chi_{Q_2} $$
where $\chi_S$ is the characteristic function of the set $S$ and
\begin{align*}
&Q_1=\{(\xi_1,\xi_2,\tau)\in \mathbb{R}^3: |\xi_1|\leq 1, |\xi_2-N|\leq 1/N, |\tau+N^2|\leq 1  \} \\& Q_2=\{(\xi_1,\xi_2,\tau)\in \mathbb{R}^3: |\xi_1|\leq 1 , |\xi_2|\leq 1/\sqrt{N}, |\tau|\leq 1  \}
\end{align*}
for $N\in \mathbb{N}$ large. Since the volume of the former set $\sim 1/N$ and of the latter $\sim N^{-1/2}$, we have $\norm{u}_{X^{s,b}}\sim N^{s-1/2}$ and $\norm{v}_{X^{s,b}}=\norm{w}_{X^{s,b}}\sim N^{-1/4}$. Note that
\begin{align*}\widehat{u\overline{v}w}(x,t)&=\int_{\mathbb{R}^3}\int_{\mathbb{R}^3}\chi_{Q_1}(x-\eta-\zeta,t-\lambda-\tau)\chi_{Q_2}(\eta,\lambda)\chi_{Q_2}({\zeta,\tau})\,\text{d}\eta\, \text{d}\lambda\, \text{d}\zeta\, \text{d}\tau \\&=\int_{Q_2}\int_{Q_2}\chi_{Q_1}(x-\eta-\zeta,t-\lambda-\tau)\,\text{d}\eta\, \text{d}\lambda\, \text{d}\zeta\, \text{d}\tau
\end{align*}
and
\begin{align*}
&(x-\eta-\zeta,t-\lambda-\tau)\in Q_1\Rightarrow (\zeta+\eta,\tau+\lambda)\in (x,t)-Q_1 
\\&(\zeta,\tau), (\eta,\lambda)\in Q_2\Rightarrow(\zeta+\eta,\tau+\lambda)\in 2Q_2, 
\end{align*}
where we have used the fact that $Q_2+Q_2=2Q_2$ as $Q_2$, having box like shape, is convex. Hence $(\zeta+\eta,\tau+\lambda)\in 2Q_2\cap [(x,t)-Q_1]$ (notice $2Q_2=\{(\xi_1,\xi_2,\tau)\in \mathbb{R}^3: |\xi_1|\leq 2,|\xi_2|\leq 2N^{-1/2}, |\tau| \leq 2\}$). Therefore in order that $2Q_2\cap [(x,t)-Q_1]\neq \emptyset$, we should have $|x_1-\xi_1|\leq 2, |x_2-\xi_2|\leq 2N^{-1/2}, |t-\tau|\leq 2$ for $(\xi_1,\xi_2,\tau)\in Q_1$ and $x=(x_1,x_2)$. Hence using these relations and taking $N$ sufficiently large, $(x,t)$ can be thought to belong to a set that covers $Q_1$. Therefore for $(x,t)\in Q_1$, we can find some $C>0$ such that  
 \begin{align*}
 |\widehat{u\overline{v}w}(x,t)|\geq CN^{-1} \chi_{Q_1}(x,t).
 \end{align*}
 This follows
 \begin{align*}
 ||u\overline{v}w||_{X^{s+a,b-1}}& \geq CN^{-1}\Big(\int_{Q_1}\langle\tau+|\xi|^2\rangle^{2(b-1)}\langle\xi\rangle^{2(s+a)}\,\text{d}\xi\, \text{d}\tau\Big)^{1/2}\\&\sim CN^{s+a-1}\Big(\int_{Q_1}\text{d}\xi\, \text{d}\tau\Big)^{1/2}\geq CN^{s+a-3/2}.
 \end{align*}
 Thus for $a>1/2$ Proposition \ref{key} implies that $CN^{s+a-3/2}\lesssim N^{s-1/2}N^{-1/4}N^{-1/4}=N^{s-1}\Rightarrow C\lesssim N^{1/2-a}$;
 nevertheless as $N$ gets larger, the right hand side of this inequality becomes arbitrarily small, while the left side is a fixed positive number.  \\ In the sequel we make use of the ideas in \cite{1} and \cite{Talbot} so as to finish the proof of Theorem \ref{mainthm}.

 \section{Proof of Theorem \ref{mainthm}} \label{pfmainthm}
Let $\delta$ be the local existence time given by the local theory. Since $b>\frac{1}{2}$ we use the embedding  $X^{s+a,b}\hookrightarrow {C^0_tH_x^{s+a}}$ along with Lemma \ref{delta}, Proposition \ref{key} and local theory bound to obtain 
\begin{multline}\label{smoothinglocal}
\norm{u(t)-e^{it\Delta}u(0)}_{C^0_tH_x^{s+a}([-\delta,\delta]\times \mathbb{R}^2)} \lesssim \norm{\int_0^t e^{i(t-\tau)\Delta}[c_1|u|^2u+c_2K(|u|^2)u](\tau)d\tau}_{X^{s+a,b}_{\delta}}\\ \lesssim \norm{c_1|u|^2u+c_2K(|u|^2)u}_{X^{s+a,b-1}_{\delta}}\lesssim \norm{u}_{X^{s,b}_{\delta}}^3\lesssim \norm{u_0}_{H^s}^3.
\end{multline}
Therefore \eqref{smoothinglocal} proves \eqref{smoothing} in the local time interval $[-\delta,\delta]$, in order to prove it for all times we have to iterate this result and then prove the continuity argument. To do so fix $t$ large then for if $r\leq t$, from \eqref{T(t)} with the choice of $\beta(s)$ (remember that for $s>1/2$ this choice makes $T(t)$ non-decreasing function of time) we have    
\begin{align*}
\norm{u(r)}_{H^s}\lesssim T(r)\leq T(t).
\end{align*}
In particular for any $j$ with $j\delta\leq t$ we have the bound \begin{align*}
\norm{u((j-1)\delta)}_{H^s}\lesssim T((j-1)\delta)\leq T(t).
\end{align*}
Thus, under favour of global well-posedness result of Theorem \ref{global}, considering the initial value problem \eqref{ee} with $u((j-1)\delta)$ being the initial data, and implementing \eqref{smoothinglocal} to this local problem, we obtain \begin{align*}
\norm{u(j\delta)-e^{i\delta\Delta}u((j-1)\delta)}_{H^{s+a}}\lesssim \norm{u((j-1)\delta)}_{H^s}^3\lesssim  T(t)^3.
\end{align*}
By the local theory we pick $\delta\sim T(t)^{-\frac{2}{s}}$ so that, for $J=t/\delta\sim tT(t)^{\frac{2}{s}}$, we get
\begin{align*}
\norm{u(t)-e^{it\Delta}u_0}_{H^{s+a}}&=\norm{u(J\delta)-e^{i\delta J\Delta}u(0)}_{H^{s+a}}\\& =\norm{\sum_{j=1}^{J} e^{i\delta(J-j)\Delta} u(j\delta)-e^{i\delta (J-j+1)\Delta}u((j-1)\delta)  }_{H^{s+a}} \\& \leq \sum_{j=1}^{J}\norm{ e^{i\delta(J-j)\Delta} u(j\delta)-e^{i\delta (J-j+1)\Delta}u((j-1)\delta)  }_{H^{s+a}} \\& =\sum_{j=1}^{J}\norm{  u(j\delta)-e^{i\delta \Delta}u((j-1)\delta)  }_{H^{s+a}} \\&\lesssim JT(t)^3 \lesssim \langle t \rangle T(t)^{3+\frac{2}{s}}.
\end{align*}
This finishes the iteration argument, in order for \eqref{smoothing} to hold we are left to show that the difference
  \begin{align*}
  D(t):=u(t)-e^{it\Delta}u_0=-i\int_0^te^{i(t-\tau)\Delta}(c_1|u|^2u+c_2K(|u|^2)u)(\tau)d\tau
  \end{align*}
 is continuous in $H^{s+a}$. Assume that $r$ is fixed and  also without loss of generality that $t>r$, then
  \begin{multline*}
  D(t)-D(r)=i\int_0^re^{i(r-\tau)\Delta}(c_1|u|^2u+c_2K(|u|^2)u)(\tau)d\tau \\ -i\int_0^te^{i(t-\tau)\Delta}(c_1|u|^2u+c_2K(|u|^2)u)(\tau)d\tau \\ =-i(e^{i(t-r)\Delta}-\text{Id})\int_0^r e^{i(r-\tau)\Delta}(c_1|u|^2u+c_2K(|u|^2)u)(\tau)d\tau \\ -i\int_r^t e^{i(t-\tau)\Delta}(c_1|u|^2u+c_2K(|u|^2)u)(\tau)d\tau.
  \end{multline*}  Thus 
  \begin{multline*}
  \norm{D(t)-D(r)}_{H^{s+a}}\\
  \lesssim \norm{\langle \xi \rangle^{s+a} \mathcal{F}\Big((e^{i(t-r)\Delta}-\text{Id})\int_0^r e^{i(r-\tau)\Delta}(c_1|u|^2u+c_2K(|u|^2)u)(\tau)d\tau\Big)}_{L^2_{\xi}} \\ \hspace{2.3cm}+ \norm{\langle \xi \rangle^{s+a} \mathcal{F}\Big(\int_r^t e^{i(t-\tau)\Delta}(c_1|u|^2u+c_2K(|u|^2)u)(\tau)d\tau\Big)}_{L^2_{\xi}} \\=: \text{I}_1+ \text{I}_2.
  \end{multline*}
 Using the inequality $|e^{i(r-t)|\xi|^2}-1|\lesssim \min(1,|\xi|^2|t-r|)\leq (|\xi|^2|t-r|)^{\epsilon}$ in the subsequent calculation (for a sufficiently small $\epsilon>0$), we obtain
  \begin{multline}\label{39}
  \text{I}_1 =\Bigl|\!\Bigl|\langle \xi \rangle^{s+a} \mathcal{F}\Big( \mathcal{F}^{-1}\Big(e^{-i(t-r)|\xi|^2}\mathcal{F}\Big(\int_0^r e^{i(r-\tau)\Delta}(c_1|u|^2u+c_2K(|u|^2)u)(\tau)d\tau\Big)\Big)\\-\int_0^r e^{i(r-\tau)\Delta}(c_1|u|^2u+c_2K(|u|^2)u)(\tau)d\tau\Big)\Bigl|\!\Bigl|_{L^2_{\xi}} \\ =  \norm{\langle \xi \rangle^{s+a} (e^{-i(t-r)|\xi|^2}-1)\mathcal{F}\Big(\int_0^r e^{i(r-\tau)\Delta}(c_1|u|^2u+c_2K(|u|^2)u)(\tau)d\tau\Big)}_{L^2_{\xi}}\\ \lesssim |t-r|^{\epsilon}\norm{\langle \xi \rangle^{s+a+2\epsilon}\mathcal{F}\Big(\int_0^r e^{i(r-\tau)\Delta}(c_1|u|^2u+c_2K(|u|^2)u)(\tau)d\tau\Big)}_{L^2_{\xi}} \\ =|t-r|^{\epsilon}\norm{\int_0^r e^{i(r-\tau)\Delta}(c_1|u|^2u+c_2K(|u|^2)u)(\tau)d\tau}_{H^{s+2\epsilon+a}}\\ = |t-r|^{\epsilon}\norm{\sum_{j=1}^{r/\delta}\int_{(j-1)\delta}^{j\delta} e^{i(r-\tau)\Delta}(c_1|u|^2u+c_2K(|u|^2)u)(\tau)d\tau}_{H^{s+2\epsilon+a}}\\ \leq|t-r|^{\epsilon} \sum_{j=1}^{r/\delta}\norm{\int_{(j-1)\delta}^{j\delta} e^{i(r-\tau)\Delta}(c_1|u|^2u+c_2K(|u|^2)u)(\tau)d\tau}_{H^{s+2\epsilon+a}} 
  \end{multline}
 where we pick the same $\delta$ given by the local existence time of the solution. The sum above contains no more than finitely many terms because $\delta$ depends upon $\sup\limits_{t\in[0,r]}\norm{u(t)}_{H^s(\mathbb{R}^2)}$ that is finite by Theorem \ref{global} and $r$ is fixed. As the length of each interval of integration is $\delta$, by the time translation and time reversal symmetries of the solution, it suffices just to estimate the following integral for $t\in[-\delta/2,\delta/2]$:
 \begin{multline*}
  \norm{\int_0^{t} e^{i(t-\tau)\Delta}(c_1|u|^2u+c_2K(|u|^2)u)(\tau)d\tau}_{H^{s+2\epsilon+a}}\\\leq  \norm{\eta(t/\delta)\int_0^{t} e^{i(t-\tau)\Delta}(c_1|u|^2u+c_2K(|u|^2)u)(\tau)d\tau}_{L^{\infty}_tH^{s+2\epsilon+a}_{x,y}} 
 \end{multline*}
 where $\eta(t)$ is a $C^{\infty}$ function supported on $[-2,2]$ that equals to $1$ on $[-1,1]$. Next we use the embedding, $X^{s+2\epsilon+a,\tilde{b}}\subset {L^{\infty}_tH^{s+2\epsilon+a}_{x,y}}$ for $\tilde{b}>\frac{1}{2}$ (set for instance $\tilde{b}=1/2+\epsilon$ for $\epsilon>0$), to majorize the integral above by \begin{align*}
 \norm{\eta(t/\delta)\int_0^{t} e^{i(t-\tau)\Delta}(c_1|u|^2u+c_2K(|u|^2)u)(\tau)d\tau}_{X^{s+2\epsilon+a,\tilde{b}}}
 \end{align*}
  which, by means of Lemma \ref{delta} with $b'=-\frac{1}{2}+2\epsilon$ and $b'=b-1$, can be estimated by
  \begin{align*}
 \delta^{\epsilon} \norm{ c_1|u|^2u+c_2K(|u|^2)u}_{X^{s+2\epsilon+a,b-1}_{\delta}}
 \end{align*}
 Finally an application of Proposition \ref{key} with $s+2\epsilon$ and the local theory gives the bound
  \begin{align*}
 \delta^{\epsilon} \norm{u}_{X^{s+2\epsilon,b}_{\delta}}^3 \lesssim \delta^{\epsilon}\norm{u_0}_{H^{s+2\epsilon}}^3.
 \end{align*}
 As a result, the sum in \eqref{39} is bounded and hence $I_1$ converges to $0$ as $t\rightarrow r$. Also the same result follows for $\text{I}_2$ by using the Proposition \ref{key} and the Lemma \ref{delta}.
 
 \section{existence of a global attractor: proof of thm
 \ref{thm:attractor}}\label{attractor}
 This section is devoted to give an alternative proof of the existence of a global attractor for the forced and weakly damped DS by using smoothing estimates. Firstly we show that the evolution operator is weakly continuous then we exploit this in handling the corresponding energy equation to upgrade the weak convergence, resulting from boundedness of the flow, to strong convergence giving rise to the asymptotic compactness of the flow. Throughout this section, $\xrightarrow{w}$ and $\xrightarrow{w^*}$ will denote the weak and the weak$^*$ convergences respectively.  
\begin{lemma}\label{weakcont}
If $u_0^k\xrightarrow{w}u_0$ in $H^1$, then for all $T>0$ and $a\in(0,\frac{1}{2})$, the linear and the nonlinear parts of the semigroup operator $U(t)$ of \eqref{elelforced} satisfy 
\begin{align*}L^{\delta}(t)u_0^k\xrightarrow{w}L^{\delta}(t)u_0\,\,\text{in}\,\,L^2([0,T];H^1)\\
N(t)u_0^k\xrightarrow{w}N(t)u_0\,\,\text{in}\,\,L^2([0,T];H^{1+a}).
\end{align*}
Moreover, for $t\in[0,T]$,
\begin{align*}L^{\delta}(t)u_0^k\xrightarrow{w}L^{\delta}(t)u_0\,\,\text{in}\,\,H^1\\
N(t)u_0^k\xrightarrow{w}N(t)u_0\,\,\text{in}\,\,H^{1+a}.
\end{align*}

\end{lemma}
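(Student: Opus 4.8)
The plan is to prove the two pairs of weak-convergence statements in the following order: first the linear part $L^{\delta}(t)u_0^k \xrightarrow{w} L^{\delta}(t)u_0$, then the nonlinear part $N(t)u_0^k \xrightarrow{w} N(t)u_0$, and in each case I would first establish the space–time version in $L^2([0,T];H^1)$ (resp.\ $L^2([0,T];H^{1+a})$) and then upgrade it to the fixed-time version. The linear part is essentially soft: $L^{\delta}(t) = e^{-\delta t}e^{it\Delta}$ is, for each fixed $t$, a bounded (indeed unitary up to the damping factor) operator on $H^1$, and bounded linear operators are weak–weak continuous; the space–time statement then follows since $u_0^k \xrightarrow{w} u_0$ in $H^1$ forces $\{u_0^k\}$ to be bounded in $H^1$, hence $\{L^\delta(\cdot)u_0^k\}$ is bounded in $L^2([0,T];H^1)$, and testing against $L^2([0,T];H^{-1})$ functionals reduces to the fixed-time statement by Fubini plus dominated convergence.

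For the nonlinear part $N(t)u_0 = u(t) - L^\delta(t)u_0$, the idea is to combine the a priori $H^1$ bound \eqref{globalH1} with the smoothing gain. Since $u_0^k \xrightarrow{w} u_0$ in $H^1$, the sequence is $H^1$-bounded, so by \eqref{globalH1} the corresponding solutions $u^k$ are bounded in $L^\infty([0,T];H^1)$, and by the smoothing estimate (the dissipative analogue of Theorem \ref{mainthm}, valid here since $c_1\ge 0$, $c_1+c_2\ge 0$ place us in the good region) the nonlinear parts $N(t)u_0^k$ are bounded in $C^0_t H^{1+a}_{x,y}$ on $[0,T]$, in fact with a polynomial-in-$T$ bound. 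Boundedness in $C^0_tH^{1+a}$ gives, along a subsequence, a weak-$*$ limit in $L^\infty([0,T];H^{1+a})$; the task is to identify that limit as $N(t)u_0$. For this I would pass to the limit in the Duhamel formula: the free-evolution-in-$x$ parts converge by the linear step, and for the nonlinear Duhamel term one uses that $H^1$-bounded sequences of solutions converge strongly in $L^2_{loc}$ (via the Strichartz/local smoothing compactness already implicit in the local theory, or directly from an $X^{s,b}$-difference estimate like Proposition \ref{key}), which is enough to pass to the limit in the cubic nonlinearity $c_1|u|^2u + c_2K(|u|^2)u$ in, say, the sense of distributions. Uniqueness of the limit then promotes subsequential convergence to full convergence, and since the limit is independent of the subsequence we get $N(t)u_0^k \xrightarrow{w} N(t)u_0$ in $L^2([0,T];H^{1+a})$.

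To pass from the space–time statement to the fixed-time statement $N(t)u_0^k \xrightarrow{w} N(t)u_0$ in $H^{1+a}$ for each $t\in[0,T]$, I would use an Arzelà–Ascoli / equicontinuity argument: the $N(t)u_0^k$ are uniformly bounded in $H^{1+a}$ and, from the Duhamel representation together with Lemma \ref{delta} and Proposition \ref{key}, they are equicontinuous as maps $[0,T]\to H^{1+a'}$ for any $a'<a$ (the difference $N(t)u_0^k - N(r)u_0^k$ is controlled as in the continuity argument of Section \ref{pfmainthm}, giving a modulus of continuity uniform in $k$). Weak compactness in $H^{1+a}$ plus this equicontinuity lets one conclude that the fixed-time weak limit exists and, by consistency with the space–time limit for a.e.\ $t$ and continuity in $t$, equals $N(t)u_0$ for every $t$.

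The main obstacle I anticipate is the identification of the limit of the cubic nonlinearity: weak convergence alone does not pass through the pointwise product $|u|^2u$, so one genuinely needs some strong/local compactness for the sequence $u^k$ — local-in-space $L^2$ or $L^3$ compactness obtained either from local smoothing for the Schrödinger group on $\mathbb{R}^2$ or from an $X^{s,b}$ contraction estimate for the difference $u^k - u$ — and one must also handle the nonlocal operator $K$, which, being an order-zero multiplier bounded on every $L^p$ with $1<p<\infty$ (properties (i)–(iii) above), is continuous for strong $L^p_{loc}$ convergence, so it does not create an extra difficulty once the strong compactness of $u^k$ is in hand. Everything else is bookkeeping with the estimates already proved.
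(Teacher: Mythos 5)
Your high-level scaffolding matches the paper: both arguments establish uniform boundedness of the nonlinear parts in $C^0_tH^{1+a}$ from the smoothing estimate, extract a weak-$*$ limit in $L^\infty([0,T];H^{1+a})$, identify the limit as $N(t)u_0$ by passing to the limit in the equation (which requires strong local compactness to handle the cubic nonlinearity), and then upgrade to the fixed-time statement. Both treat the linear part as soft. You also correctly isolate the genuine difficulty, namely the identification of the limit of the nonlinearity.

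There is, however, a real gap in the way you propose to obtain the strong local compactness. Your two suggested routes do not hold up. The second one, ``an $X^{s,b}$-difference estimate like Proposition~\ref{key},'' is a Lipschitz-type estimate on the solution map, and it requires strong convergence of the data to produce strong convergence of the solutions; here you only have $u_0^k\xrightarrow{w}u_0$, so nothing is gained. The first one, ``Strichartz/local smoothing compactness already implicit in the local theory,'' is not a compactness statement by itself: Strichartz and local-smoothing bounds control space integrability or a gain of derivatives, but do not, on their own, yield precompactness of the time-dependent family $\{n^k\}$ in $C([0,T];L^2_{\mathrm{loc}})$. What is missing is a control on the time derivatives. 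The paper's proof supplies exactly this ingredient: after passing to $g=(1-\Delta)^{-1}f$ and $v=u+g$ so that the forcing is regular enough to run the $X^{s,b}$ machinery, it shows $\{n^k\}$ is bounded in $C([0,T];H^{1+a})\cap C^1([0,T];H^{a-1})$ (the $C^1$ bound coming directly from the equation), then invokes Rellich for spatial local compactness at fixed time, Arzel\`a--Ascoli for equicontinuity in time, and interpolation to promote this to strong convergence in $C([0,T];H^{1+a}_{\mathrm{loc}})$. This is the Aubin--Lions-type step that your sketch gestures at for the fixed-time statement but never applies where it is actually needed, namely for the identification of the space-time limit. Once that is in place, the limit is a distributional solution (using properties (i)--(iii) of $K$ to pass to the limit in the nonlocal term), uniqueness gives $n=N(t)u_0$, and the weak-$*$ convergence in $L^\infty_tH^{1+a}$ dualizes into weak convergence in $L^2_tH^{1+a}$; your fixed-time upgrade via boundedness in $H^{1+a}$ plus uniqueness is then in the same spirit as the paper's. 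In short: the structure is right, but the key compactness input must come from the time-derivative bound and Arzel\`a--Ascoli, not from Strichartz or from an $X^{s,b}$ contraction, and your proposal does not supply it.
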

\begin{proof}
We just verify the assertions concerning the nonlinear part as the ones for the linear part will follow from the Fourier representation of the linear flow at once. To make use of the smoothing result for the forced problem \eqref{elelforced}, we shall transform the equation \eqref{elelforced} with data $u_0$ by setting $$g=\big(\widehat{f}/\langle\xi\rangle^2\big)^{\vee}=(1-\Delta)^{-1}f\in H^2,$$ 
and $v=u+g$ so that obtain the equation 
\begin{align}\label{transformed}
    iv_t+\Delta v+i\delta v=c_1|v-g|^2(v-g)+c_2(v-g)\phi_x +(1+i\delta g),
\end{align}
with $\phi_x=K(|v-g|^2)$ and data $v(\cdot,0)=u_0(\cdot)+g(\cdot)$. Let $L^{\delta}$ denote the semigroup for the corresponding homogeneous linear equation \begin{align}\label{linearw}
iw_t+\Delta w+i\delta w=0.
\end{align} Then the nonlinear part $n=v-w$ satisfies the equation
\begin{align}\label{noneqn}
in_t+\Delta n+i\delta n=c_1|n+w-g|^2(n+w-g)+c_2(n+w-g)K(|n+w-g|^2)+(1+i\delta g)
\end{align}
with data $n(\cdot,0)=0$. Proceeding as in Section \ref{pfmainthm}, notably using the trilinear estimate \eqref{keyest} with $s=1$, $a\in (0,\frac{1}{2})$, for $T\leq 1$, we get 
\begin{align} \label{diss.local}
\norm{n}_{C^0_tH^{1+a}([0,T]\times \mathbb{R}^2)}\lesssim \norm{u_0}_{H^1}^3+\norm{g}_{H^{a-1}}\leq \norm{u_0}_{H^1}^3+\norm{f}_{L^{2}}.
\end{align}
Note that in the above estimate we use a variant of Lemma \ref{delta} that replaces the linear group for the nondissipative equation by the dissipative group $L^{\delta}(t)=e^{it\Delta-t\delta}$. For a proof of this, see \cite{Erdoganlong}. Using the $H^1$ global wellposedness (a priori bound \eqref{globalH1}) in iterating the local result above, we obtain the global bound \begin{align} \label{globalbound}
\norm{n(t)}_{H^{1+a}}\leq C(a,\delta, \norm{f}_{L^2} \norm{u_0}_{H^1}),  \end{align}
for the details see the section $3$ of \cite{Erdoganlong}, or section $6$ of \cite{ErdoganZak}. Lastly the continuity in $H^{1+a}$ follows as in Section \ref{pfmainthm}, so for any $T>0$ and initial data $u_0\in H^1$, we have
\begin{align}\label{diss.global}
 n \in C^0_tH^{1+a}([0,T]\times \mathbb{R}^2) .  
\end{align}  
In order to show that $N(t)u_0^k\xrightarrow{w}N(t)u_0$ in spaces given by the statement of the lemma, it suffices to show that every subsequence of $N(t)u_0^k$ has a further subsequence which converges weakly to the same limit. Let $w^k$ denote the solution to \eqref{linearw} with data $u_0^k$. In relation to this denote by $n^k$ the nonlinear part. Since weak convergence $u_0^k\xrightarrow{w}u_0$ in $H^1$ implies that $\sup_k\norm{u_0^k}_{H^1}\leq M$ for some $M>0$, using this in \eqref{diss.local} and \eqref{globalbound}, we infer that, for every $T>0$, $\{n^k\}_k$ is bounded in 
 \begin{align}\label{boundedness}
 C([0,T];H^{1+a})\cap C^1([0,T];H^{a-1})
\end{align}
with a uniform bound of \eqref{globalbound}.
Firstly, in conjuction with this boundedness we infer, by Arzel\`{a}-Ascoli theorem, that $\{n^k\}_k$ is relatively compact in $C([0,T];H_{loc}^{a-1})$ thanks to the uniform boundedness of the derivatives which implies equicontinuity. Therefore interpolating between this and \eqref{boundedness} gives us a subsequence of $\{n^k\}_k$ that converges strongly in $C([0,T];H_{loc}^{1+a})$. Secondly, by the Banach-Alaoglu theorem, boundedness of $\{n^k\}_k $ in \eqref{boundedness} yield a weak* convergent subsequence in $L^{\infty}([0,T];H^{1+a})$. Therefore combining these two, we reach a further subsequence, denoted also by $\{n^k\}_k$, convergent in the above spaces with the corresponding type of convergences. We shall write $n^k\xrightarrow{w^*}n$ in $L^{\infty}([0,T];H^{1+a})$, and $n^k\rightarrow n$ strongly in $C([0,T];H_{loc}^{1+a})$ (weak$^*$ limits are unique). The analogous arguments for the linear parts $w^k$ hold in $H^1$ as well, so denote the corresponding limit by $w$. Later we will see that the limit $n$ is indeed the weak limit in the spaces dictated by the lemma. Using local strong convergence above, next, we will show that $n$ is a distributional solution. Note that $n^k$ satisfies the equation \eqref{noneqn} and let $F(p,q)=c_1|p+q-g|^2(p+q-g)+c_2(p+q-g)K(|p+q-g|^2)$. Thus for any $\varphi \in C_c^{\infty}([0,T];\mathbb{R}^2)$, 
\begin{multline*}
 \iint\Big(in_t+\Delta n+i\delta n-F(n,w)-(1+i\delta )g\Big)\varphi\, \text{d}x\,\text{d}t\\= \iint \Big(\big[-i\varphi_t+\Delta \varphi+i\delta\varphi\big]n-\big[(1+i\delta g)+ F(n,w)\big]\varphi\Big)\, \text{d}x\,\text{d}t \\= \lim_{k\rightarrow \infty}\iint \Big(\big[-i\varphi_t+\Delta \varphi+i\delta\varphi\big]n^k-\big[(1+i\delta g)+ F(n^k,w^k)\big]\varphi\Big)\, \text{d}x\,\text{d}t\\=\lim_{k\rightarrow \infty}\iint\Big(in^k_t+\Delta n^k+i\delta n^k-F(n^k,w^k)-(1+i\delta )g\Big)\varphi\, \text{d}x\,\text{d}t=0,
\end{multline*}
which proves that $n$ is a distributional solution. It is just left to verify the second equality above. It suffices to show that the following identity \begin{multline*}
\Big|\iint \Big(\big[-i\varphi_t+\Delta \varphi+i\delta\varphi\big](n^k-n)-\big[F(n^k,w^k)-F(n,w)\big]\varphi\Big)\, \text{d}x\,\text{d}t   \Big|\\ \leq \Big|\iint \big[-i\varphi_t+\Delta \varphi+i\delta\varphi\big](n^k-n)\text{d}x\,\text{d}t \Big|+\norm{\varphi}_{L^{\infty}_{x,t}}\Big|\iint\limits_{\text{supp}\varphi}\big[F(n^k,w^k)-F(n,w)\big]\, \text{d}x\,\text{d}t   \Big|
\end{multline*}
eventually decreases to zero for increasing $k$.
The first integral is clearly decaying due to strong local convergence of $n^k$, whereas the integral part of the second summand is majorized by 
\begin{multline*}
 \Big|\iint\limits_{\text{supp}\varphi}\big(n^k-n+w^k-w\big)\big[c_1I+c_2K\big](|n^k+w^k-g|^2)\, \text{d}x\,\text{d}t\Big|\\+ \Big|\iint\limits_{\text{supp}\varphi}(\overline{n^k-n+w^k-w})(n^k+w^k-g)\big[c_1I+c_2K\big](n+w-g)\, \text{d}x\,\text{d}t\Big|\\+\Big|\iint\limits_{\text{supp}\varphi}\big(n^k-n+w^k-w\big)\big(\overline{n+w-g}\big)\big[c_1I+c_2K\big](n+w-g)\, \text{d}x\,\text{d}t\Big|  
\end{multline*}
where we have used the property $(\text{iii})$ of the operator $K$ given in Section \ref{prelim} in the computation above. Owing to the strong local convergences of $n^k$ and $w^k$ along with the boundedness of $K$, we conclude that these sums vanish in the limit. Moreover, by the uniqueness of the weak$^*$ limits, $n$ is a unique distribution belonging to $C([0,T];H^{1+a})$ yielding that $n=N(t)u_0$ (since $n$ has shown to be the distributional solution of \eqref{noneqn}). Therefore, using the fact that $L^2([0,T];H^{1+a})$ embeds in the dual of $L^{\infty}([0,T];H^{1+a})$, weak$^*$ convergence in $L^{\infty}([0,T];H^{1+a})$ implies the weak convergence in $L^2([0,T];H^{1+a})$. This finishes the proof of the first assertion in the lemma. 

To prove the second argument, fix a $\tilde{t}\in [0,T]$. As before smoothing estimate together with the boundedness of the initial data $u^k_0$ imply that $\{N(\tilde{t})u_0^k\}_k$ is bounded in $H^{1+a}$. Thus there exists a weakly convergent subsequence, still denoted by $\{N(\tilde{t})u_0^k\}$, that converges in $H^{1+a}$, say to $\widetilde{n}$. But as we know, from the previous discussion above, that $N(t)u_0^k\xrightarrow{w^*} N(t)u_0$ in $C([0,T];H^{1+a})$. So the uniqueness entails that $\widetilde{n}=N(\tilde{t})u_0$.

\end{proof}

\begin{proof}[Proof of Theorem~\ref{thm:attractor}]
To begin with, we note that the existence of an absorbing ball $\mathcal{B}$ of the evolution follows from \eqref{globalH1}, indeed the detailed proof was given in \cite{Wang}. Hence to attain a global attractor, it is just left to affirm that the propogator $U(t)$ is asymptotically compact. That is, it is sufficient to show that for any sequence of initial data $\{u_{0,k}\}_k$ in absorbing ball and any sequence of times $t_k\rightarrow \infty$, the sequence $\{U(t_k)u_{0,k}\}_k$ has a convergent subsequence in $H^1$. Note that for $u_0\in\mathcal{B}$, \eqref{globalbound} implies that the nonlinear part $N(t)u_0$ of \begin{align*}
U(t)u_0=L^{\delta}(t)u_0+N(t)u_0
\end{align*} is contained in a ball $B_R$ in $H^{1+a}$ with radius $R=R(a,\delta,\norm{f}_{L^2})$, $a\in(0,\frac{1}{2})$. As a result, $\{N(t_k)u_{0,k}\}_k\subset B_R$. Therefore we can find a subsequence, still denoted by $N(t_k)u_{0,k}$, that converges weakly in $H^{1+a}$. Moreover since the weak and weak$^*$ topologies agree on a reflexive spaces, the Banach-Alaoglu theorem yields that, up to a subsequence, $U(t_k)u_{0,k}$ converges weakly in $H^1$. As $L^{\delta}(t_k)u_{0,k}\rightarrow 0$ strongly in $H^1$ as $t_k\rightarrow \infty$, $N(t_k)u_{0,k}$ and $U(t_k)u_{0,k}$ converge to the same limit, say to u. Furthermore, for every $T>0$, we can find a further subsequence so that $N(t_k-T)u_{0,k}$ and $U(t_k-T)u_{0,k}$ converge weakly in $H^{1+a}$ and $H^1$ respectively. As above, by the decay of the linear part, the limits are the same, so denote it by $u_T$. By Lemma \ref{weakcont},
\begin{align*}
U(t_k-T)u_{0,k}\xrightarrow{w} u_T\,\,\text{in}\,\,H^1\implies U(t_k)u_{0,k}=U(T)\big(U(t_k-T)u_{0,k}\big)\xrightarrow{w}U(T)u_T \,\,\text{in}\,\,H^1.  
\end{align*}
Therefore, by the uniqueness of a weak limit, $U(T)u_T=u$. In a subsequent discussion, sometimes we need to take $T\rightarrow \infty$ in order to obtain strong convergences. So to make sense of this, we may implement a diagonalization argument for a countable set $\{T\in \mathbb{N}\}$ so that, up to a same subsequence for all $T$, $U(t_k-\cdot)u_{0,k}$ and $N(t_k-\cdot)u_{0,k}$ converge weakly at each $T$ in the corresponding spaces above.   

Next we want to upgrade the weak $H^1$ convergence of the solution flow $U(t_k)u_{0,k}$ to a strong $H^1$ convergence. Firstly using the equation \eqref{forcedsingle}, we can obtain $$\frac{d}{dt}\norm{u}_{L^2}+2\,\delta \norm{u}_{L^2}=2\,\text{Im}\langle f,u \rangle,$$ and then application of the Gronwall lemma for the evolution $U(t)$ gives that
\begin{align*}
\norm{U(t_k)u_{0,k}}_{L^2}^2&=e^{-2\delta T}\norm{U(t_k-T)u_{0,k}}_{L^2}^2+2\,\text{Im}\int_0^T e^{-2\delta (T-\tau)}\langle f,U(t_k-T+\tau)u_{0,k} \rangle_{L^2_x}\, \text{d}\,\tau \\&  \norm{U(T)u_T}_{L^2}^2=e^{-2\delta T}\norm{u_T}_{L^2}^2+2\,\text{Im}\int_0^T e^{-2\delta (T-\tau)}\langle f,U(\tau)u_T \rangle_{L^2_x}\, \text{d}\,\tau,
\end{align*}
that yield
\begin{align*}
 \norm{U(t_k)u_{0,k}}_{L^2}^2- \norm{U(T)u_T}_{L^2}^2&= e^{-2\delta T}\big(\norm{U(t_k-T)u_{0,k}}_{L^2}^2-\norm{u_T}_{L^2}^2\big)\\&+2\,\text{Im}\int_0^T e^{-2\delta (T-\tau)}\langle f,U(t_k-T+\tau)u_{0,k}-U(\tau)u_T \rangle_{L^2_x}\, \text{d}\,\tau.
\end{align*}
The first summand becomes negligible by taking sufficiently large $T$ since letting $k\rightarrow \infty$ and using the fact that $u_{0,k}\in \mathcal{B}$, we ensure that the norms in the parantheses are finite (weak convergence in $H^1$ implies that $\norm{u_T}_{L^2}\leq \lim\limits_k\inf \norm{U(t_k-T)u_{0,k}}_{L^2}$). The second summand vanishes in the limit by Lemma \ref{weakcont} because, with $U(T)u_T=u$, we have \begin{multline*}
U(t_k)u_{0,k}\xrightarrow{w}u \,\,\text{in}\,\,H^1\implies \\ U(t_k-T+\tau)u_{0,k}=U(\tau-T)\big(U(t_k)u_{0,k}\big)\xrightarrow{w}U(\tau)u_T\,\,\text{in}\,\,L^2([0,T];H^1).
\end{multline*}
As a consequence we get that
\begin{align*}
\lim\limits_k\sup \big(\norm{U(t_k)u_{0,k}}_{L^2}^2-\norm{U(T)u_T}_{L^2}^2\big)=\lim\limits_k\sup \big(\norm{U(t_k)u_{0,k}}_{L^2}^2-\norm{u}_{L^2}^2\big)\leq 0,    
\end{align*}
which, along with $U(t_k)u_{0,k}\xrightarrow{w}u$ in $H^1$, implies that $U(t_k)u_{0,k}\rightarrow u$ strongly in $L^2$. This strong $L^2$ convergence will be important in the upcoming energy calculations. So define the functional $E$ by \begin{multline*}E(u_0)(t)=\norm{\nabla U(t)u_0}_{L^2}^2+\frac{c_1}{2}\norm{U(t)u_0}_{L^4}^4+\frac{c_2}{2}\int K(|U(t)u_0|^2)|U(t)u_0|^2\,\text{d}x\\+ 2\,\text{Re}\,\int f\overline{U(t)u_0}\,\text{d}x,\end{multline*}
and the time derivative is as follows
$$\frac{d}{dt}E(u_0)(t)=-2\delta E(u_0)(t)+F(u_0)(t)$$
where
$$F(u_0)(t)=-\delta c_1 \norm{U(t)u_0}_{L^4}^4-\delta c_2\int K(|U(t)u_0|^2)|U(t)u_0|^2\,\text{d}x+2\delta \,\text{Re}\,\int f\overline{U(t)u_0}\,\text{d}x.$$
Gronwall lemma implies that
$$E(u_{0,k})(t_k)-E(u_T)(T)=\sum\limits_{j=1}^4I_j,$$
where
\begin{align*}
I_1&=e^{-2\delta T}\big(E(u_{0,k})(t_k-T)-E(u_T)(0)\big)
\\ I_2&=-\delta c_1 \int_0^T e^{-2\delta (T-\tau)}\big(\norm{U(t_k-T+\tau)u_{0,k}}_{L^4}^4-\norm{U(\tau)u_T}_{L^4}^4\big)\text{d}\tau
\\I_3&=-\delta c_2\int_0^T\int e^{-2\delta (T-\tau)}\big(K(|U(t_k-T+\tau)u_{0,k}|^2)|U(t_k-T+\tau)u_{0,k}|^2\\& \hspace{7.2cm} -K(|U(\tau)u_T|^2)|U(\tau)u_T|^2\big)\text{d}x\,\text{d}\tau\\
I_4&=2\delta\, \text{Re}\int_0^Te^{-2\delta (T-\tau)}\langle f,U(t_k-T+\tau)u_{0,k}-U(\tau)u_{0,k}\rangle_{L^2_x}\text{d}\tau. \end{align*}
$I_1$ gets arbitrarily small by increasing $T$, and $I_2$ can be majorized by
\begin{align*}
 &\delta c_1\int _0^Te^{-2\delta (T-\tau)}\norm{U(t_k-T+\tau)u_{0,k}-U(\tau)u_T}_{L^4}\\& \hspace{4cm}\times\big(\norm{U(t_k-T+\tau)u_{0,k}}_{L^4}+\norm{U(\tau)u_T}_{L^4}\big)^3\text{d}\tau, 
\end{align*}
note that by $L^4$ Gagliardo-Nirenberg inequality 
 \begin{multline*}
\norm{U(t_k-T+\tau)u_{0,k}-U(\tau)u_T}_{L^4}\\\lesssim\norm{U(\tau-T)\big(U(t_k)u_{0,k}-U(T)u_T\big)}_{L^2}^{1/2}\norm{U(t_k-T+\tau)u_{0,k}-U(\tau)u_T}_{H^1}^{1/2}.
\end{multline*}
Then by the strong continuity of $U(\tau-T)$ and the strong $L^2$ convergence $U(t_k)u_{0,k}\rightarrow U(T)u_T$, the majorant of $I_2$ above vanishes in the limit as weak $H^1$ convergence yields the $H^1$ boundedness of the norms. Write the third term as
\begin{multline*}
 I_3=-\delta c_2\int_0^T\int e^{-2\delta(T-\tau)}\big[K(|U(t_k-T+\tau)u_{0,k}|^2)-K(|U(\tau)u_T|^2)\big ]|U(t_k-T+\tau)u_{0,k}|^2\text{d}x\,\text{d}\tau \\   -\delta c_2\int_0^T\int e^{-2\delta(T-\tau)}K(|U(\tau)u_T|^2)\big(|U(t_k-T+\tau)u_{0,k}|^2-|U(\tau)u_T|^2\big)\text{d}x\,\text{d}\tau,
\end{multline*}
first use the linearity and the $L^p$ boundedness of the operator $K$, then proceed by using the same reasoning as above to conclude that $I_3$ decays to zero. Finally, by using the weak continuity (Lemma \ref{weakcont}), $I_4$ vanishes in the limit. Therefore, as $U(T)u_T=u$, we get that
$$\lim\limits_{k\rightarrow \infty}\sup \big(E(u_{0,k})(t_k)-E(u)(0)\big)=\lim\limits_{k\rightarrow \infty}\sup \big(E(u_{0,k})(t_k)-E(u_T)(T)\big)\leq 0.$$
This inequality together with the definition of $E$ and taking limits as above imply that $$\lim\limits_{k\rightarrow\infty}\sup \norm{\nabla U(t_k)u_{0,k}}_{L^2}^2\leq \norm{\nabla u}_{L^2}^2.$$ Therefore this with $U(t_k)u_{0,k}\xrightarrow{w}u$ in $H^1$ lead to the $L^2$ strong convergence of $\nabla U(t_k)u_{0,k}$ to $\nabla u$. Consequently $U(t_k)u_{0,k}\rightarrow u$ strongly in $H^1$. This finishes the proof of asymptotic compactness, so the proof is complete. 
\end{proof}
\section*{Acknowledgement}
The author would like to thank his Ph.D advisor T. Burak G\"{u}rel for his guidance and many salutary discussions.

\end{document}